\newtheorem{same}{This should never appear}
\newtheorem{defin}[same]{Definition}
\newtheorem{remark}[same]{Remark}
\newtheorem{theorem}[same]{Theorem}
\newtheorem{corollary}[same]{Corollary}
\newtheorem{lemma}[same]{Lemma}
\newtheorem{fact}[same]{Fact}
\newtheorem{prop}[same]{Proposition}
\newtheorem{hyp}[same]{Hypothesis}
\newtheorem{defin*}{Definition}
\newtheorem*{theorem*}{Theorem}
\newbox\noforkbox \newdimen\forklinewidth
\noforkbox\hbox{\lower 2pt\box1\lower 2pt\box0\relax}
\def\unionstick{\mathop{\copy\noforkbox}\limits}
\def\nonfork_#1{\unionstick_{\textstyle #1}}
\newbox\doesforkbox
\doesforkbox\hbox{\lower 2pt\box1 \lower 2pt\box2\lower2pt\box0\relax}
\def\nunionstick{\mathop{\copy\doesforkbox}\limits}
\def\fork_#1{\nunionstick_{\textstyle #1}}
\newcommand{\Mod}{\textrm{Mod}}
\newcommand{\cf}{\operatorname{cf }}
\newcommand{\rest}{\upharpoonright}
\newcommand{\otp}{\operatorname{otp}}
\newcommand{\Aut}{\operatorname{Aut}}
\newcommand{\C}{\operatorname{\mathfrak{C}}}
\def\b{\mathbf{b}}
\newcommand{\K}{\mathcal{K}}
\newcommand{\leap}[1]{\prec_{#1}}
\newcommand{\ltap}[1]{\precneqq_{#1}}
\newcommand{\lea}{\leap{\K}}
\newcommand{\leas}{\leap{\K^\ast}}
\newcommand{\ltas}{\ltap{\K^\ast}}
\newcommand{\LS}{\operatorname{LS}}
\newcommand{\EM}{\operatorname{EM}}
\newcommand{\gS}{\operatorname{gS}}
\newcommand{\dnf}{\unionstick}
\newcommand{\adnf}{\dnf^*}
\newcommand{\Ll}{\mathbb{L}}
\newcommand{\seq}[1]{\langle #1 \rangle}
\newcommand{\gtp}{\operatorname{gtp}}
\newcommand{\fct}[2]{{}^{#1}#2}
\title[Superstability from categoricity in AECs]{Superstability from categoricity in abstract elementary classes}
\author{Will Boney}
\address{Department of Mathematics \\ Harvard University \\ Cambridge, Massachusetts, USA}
\email{wboney@math.harvard.edu}
\thanks{This material is based upon work done while the first author was supported by the National Science Foundation under Grant No. DMS-1402191.}
\author{Rami Grossberg}
\address{Department of Mathematical Sciences \\ Carnegie Mellon University \\ Pittsburgh, Pennsylvania, USA}
\email{rami@cmu.edu}
\author{Monica M. VanDieren}
\address{Department of Mathematics\\
Robert Morris University\\
Moon Township Pennsylvania USA}
\email{vandieren@rmu.edu}
\author{Sebastien Vasey}
\address{Department of Mathematical Sciences \\ Carnegie Mellon University \\ Pittsburgh, Pennsylvania, USA}
\email{sebv@cmu.edu}
\date{\today\\
  AMS 2010 Subject Classification: Primary 03C48. Secondary: 03C45, 03C52, 03C55, 03C75, 03E05.}
\keywords{Abstract elementary classes; Categoricity; Superstability; Splitting; Coheir; Independence; Forking}
\begin{document}


\begin{abstract}

  Starting from an abstract elementary class with no maximal models, Shelah and Villaveces have shown (assuming instances of diamond) that categoricity implies a superstability-like property for nonsplitting, a particular notion of independence. We generalize their result as follows: given any abstract notion of independence for Galois (orbital) types over models, we derive that the notion satisfies a superstability property provided that the class is categorical and satisfies a weakening of amalgamation. This extends the Shelah-Villaveces result (the independence notion there was splitting) as well as a result of the first and second author where the independence notion was coheir. The argument is in ZFC and fills a gap in the Shelah-Villaveces proof.
\end{abstract}

\maketitle

%

\section{Introduction}

\subsection{General motivation and history}

Forking is one of the central notions of model theory, discovered and developed by Shelah in the seventies for stable and NIP theories \cite{shelahfobook78}. 
One way to extend Shelah's first-order stability theory is to move beyond first-order.  In the mid seventies, Shelah did this by starting the program of \emph{classification theory for non-elementary classes} focusing first on classes axiomatizable in $\Ll_{\omega_1, \omega}(\mathbf{Q})$ \cite{sh48} and later on the more general abstract elementary classes (AECs) \cite{sh88}. Roughly, an AEC is a pair $\K = (K, \lea)$ satisfying some of the basic category-theoretic properties of $(\Mod(T), \prec)$ (but not the compactness theorem). Among  the central problems, there are the decades-old categoricity and eventual categoricity conjectures of Shelah.  In this paper, we assume that the reader has a basic knowledge of AECs, see for example \cite{grossberg2002} or \cite{baldwinbook09}.

One key shift in this program is the move away from syntactic types (studied in the $\Ll_{\lambda^+, \omega}$ context by \cite{sh16, grsh222, grsh238} and others) and towards a semantic notion of type, introduced in \cite{sh300-orig} and named \emph{Galois type} by Grossberg \cite{grossberg2002}.\footnote{Shelah uses the name \emph{orbital types} in some later papers.} This has an easy definition when the class $\K$ has amalgamation, joint embedding and no maximal models, as these properties allow us to assume that all the elements of $\K$ we would like to discuss are substructures of a ``monster'' model $\C\in \K$. In that case, $\gtp(\b/A)$ is defined as the orbit of $\b$ under the action of the group $\Aut _A (\C)$ on $\C$. One can also develop the notion of Galois type without the above assumption, however then the definition is more technical.



\subsection{Independence, superstability, and no long splitting chains in AECs}\label{indep-nolong}

In \cite{sh394} a first candidate for an independence relation was introduced: the notion of $\mu$-splitting (for $M_0 \lea M$ both in $\K_\mu$, $p\in\gS(M)$ \emph{$\mu$-splits over $M_0$} provided there are $M_0\lea M_\ell \lea M$, $\ell=1,2$ and $f: M_1\cong_{M_0}M_2$ such that $f(p\restriction M_1)\neq p\restriction M_2$).

This notion was used by Shelah to establish a downward version of his categoricity conjecture from a successor for classes having the amalgamation property. Later similar arguments \cite{tamenesstwo, tamenessthree} were used to derive a strong upward version of Shelah's conjecture for classes satisfying the additional locality property of (Galois) types called tameness.

In Chapter II of \cite{shelahaecbook}, Shelah introduced good $\lambda$-frames: an axiomatic definition of forking on Galois types over models of size $\lambda$. The notion is, by definition, required to satisfy basic properties of forking in superstable first-order theories (e.g.\ symmetry, extension, uniqueness, and local character). The theory of good $\lambda$-frames is well-developed and has had several applications to the categoricity conjecture (see Chapters III and IV of \cite{shelahaecbook} and recent work of the fourth author \cite{ap-universal-v10, categ-universal-2-v3-toappear, categ-primes-v4, downward-categ-tame-apal}).

Constructions of good frames rely on weaker independence notions like nonsplitting, see e.g.\ \cite{ss-tame-jsl, vv-symmetry-transfer-v3}.  A key property of splitting in these constructions is that there is ``no long splitting chains in $\K_\mu$'': if $\seq{M_i : i \le \alpha}$ is an increasing continuous chain in $\K_\mu$ (so $\alpha < \mu^+$ is a limit ordinal) and $M_{i + 1}$ is universal over $M_i$ for each $i < \alpha$, then for any $p \in \gS (M_\alpha)$ there exists $i < \alpha$ so that $p$ does not $\mu$-split over $M_i$ (this is called \emph{strong universal local character at $\alpha$} in the present paper, see Definition \ref{indep-def}). This can be seen as a replacement for the statement ``every type does not fork over a finite set''. The property is already studied in \cite{sh394}, and has several nontrivial consequences: for example (assuming amalgamation, joint embedding, no maximal models, stability in $\mu$, and tameness), no long splitting chains in $\K_\mu$ implies that $\K$ is stable everywhere above $\mu$ \cite[Theorem 5.6]{ss-tame-jsl} and  has a good $\mu^+$-frame on the subclass of saturated models of cardinality $\mu^+$ \cite[Corollary 6.14]{vv-symmetry-transfer-v3}. No long splitting chains has consequences for the uniqueness of limit models, another superstability-like property saying in essence that saturated models can be built in few steps (see for example \cite{shvi635, vandierennomax, nomaxerrata, vandieren-symmetry-apal}).

The first and second authors have explored another approach to independence by adapting the notion of coheir to AECs.  They have shown that for classes satisfying amalgamation which are also tame and short (a strengthening of tameness, using the variables of a type instead of its parameters), failure of a certain order property implies that coheir has some basic properties of forking from a stable first-order theory. There the ``no long coheir chain'' property also has strong consequences (for example on the uniqueness of limit models \cite[Corollary 6.18]{bg-v11}).

\subsection{No long splitting chains from categoricity}

It is natural to ask whether no long splitting chains (or no long coheir chains) in $\K_\mu$ follows from categoricity above $\mu$. Shelah has shown that this holds for splitting (assuming amalgamation and no maximal models) if the categoricity cardinal has cofinality greater than $\mu$  \cite[Lemma 6.3]{sh394}. Without any cofinality restriction, a breakthrough was made in a paper of Shelah and Villaveces when they proved no long splitting chains assuming no maximal models and instances of diamond  \cite[Theorem 2.2.1]{shvi635}. Later, Boney and Grossberg  used the Shelah-Villaveces argument to derive the result in their context also for coheir \cite[Theorem 6.8]{bg-v11}. It was also observed  that the Shelah-Villaveces argument does not need diamond if one assumes full amalgamation \cite[5.3]{gv-superstability-v4}. In conclusion we have:

\begin{fact}\label{known-results}
  Let $\K$ be an AEC with no maximal models. Let $\LS (\K) \le \mu < \lambda$ and assume that $\K$ is categorical in $\lambda$.

  \begin{enumerate}
  \item\label{shvi-known} \cite[Theorem 2.2.1]{shvi635} If $\Diamond_{S_{\cf{\mu}}^{\mu^+}}$ holds then $\K$ has no long splitting chains in $\K_\mu$.
  \item\label{bg-known} \cite[Theorem 6.8]{bg-v11} If $\K$ has amalgamation, $\kappa \in (\LS (\K), \mu)$, $\K$ does not have the weak $\kappa$-order property and is fully $(<\kappa)$-tame and short, then $\K$ has no long coheir chains in $\K_\mu$.
  \item\label{gv-known} \cite[Corollary 5.3]{gv-superstability-v4} If $\K$ has amalgamation, then $\K$ has no long splitting chains in $\K_\mu$.
  \end{enumerate}
\end{fact}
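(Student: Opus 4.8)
Since all three parts assert the same conclusion --- local character for an independence notion, extracted from categoricity --- under variants of the same hypotheses, the plan is to treat them uniformly as instances of the Shelah--Villaveces argument, specializing the abstract notion to $\mu$-splitting for (\ref{shvi-known}) and (\ref{gv-known}) and to $(<\kappa)$-coheir for (\ref{bg-known}). In each case the argument is by contradiction: assume there is an increasing continuous chain $\seq{M_i : i \le \alpha}$ in $\K_\mu$ with $\alpha < \mu^+$ limit, each $M_{i+1}$ universal over $M_i$, and a type $p \in \gS(M_\alpha)$ that splits (resp.\ forks) over every $M_i$; by thinning the chain one may assume $\alpha$ is regular.

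The first ingredient is to extract stability and amalgamation-like consequences from categoricity. Where amalgamation is assumed (parts (\ref{bg-known}) and (\ref{gv-known})), categoricity in $\lambda > \mu$ together with a type-counting argument yields stability in $\mu$, hence universal extensions and limit models in $\K_\mu$; in part (\ref{shvi-known}), with no amalgamation hypothesis, one uses categoricity, no maximal models and $\Diamond_{S^{\mu^+}_{\cf(\mu)}}$ to obtain the weakening of amalgamation (density of amalgamation bases, existence of universal extensions, uniqueness of limit models up to the relevant level) that makes the construction possible. For the coheir version one additionally invokes the failure of the weak $\kappa$-order property and full $(<\kappa)$-tameness and shortness, which together guarantee that $(<\kappa)$-coheir is a reasonable independence relation --- monotonicity, a form of extension, and continuity along increasing chains --- over the models that appear.

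The core is a tree construction. Using at each level $i < \alpha$ an automorphism (or, for coheir, a pair of models and a map) witnessing that $p$ splits/forks over $M_i$, build a tree $\seq{N_\eta : \eta \in {}^{\le\alpha}2}$ of models in $\K_\mu$, compatible maps, and an increasing tree of types $p_\eta$ such that the two immediate successors of each node carry contradictory restrictions of the relevant type. The branches then produce many Galois types over a model built from the tree; arranging the parameters so that this count exceeds the stability bound obtained above (this is where the cardinal bookkeeping, and in part (\ref{shvi-known}) the diamond guessing along $S^{\mu^+}_{\cf(\mu)}$, must be set up carefully) yields the desired contradiction. When full amalgamation is available the successor models in the tree exist outright; without it, the construction is carried out inside a single model of size $\mu^+$ and $\Diamond_{S^{\mu^+}_{\cf(\mu)}}$ is used as a prediction principle to anticipate the embeddings that splitting would otherwise supply at limit stages.

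I expect the main obstacle to be the interface between this tree construction and the lack of amalgamation in part (\ref{shvi-known}): coordinating the diamond-guided guesses with the maps produced by splitting --- in particular making everything cohere at limit levels --- is delicate, and it is exactly here that the original Shelah--Villaveces proof has a gap, which is the point of recasting the whole argument abstractly in ZFC. A secondary difficulty, for the coheir case, is checking that the order-property and tameness/shortness hypotheses really supply the continuity/union property of coheir along chains that is needed to keep the branch types pairwise distinct.
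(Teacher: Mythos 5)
The tree construction you outline is the right tool for establishing \emph{weak} universal local character at \emph{one} $\sigma$ with $2^\sigma > \mu$: branching on splitting witnesses gives $2^\sigma$ distinct Galois types over a model of size $\mu$, contradicting $\mu$-stability, and shows that some restriction $p \rest M_{i+1}$ fails to split over $M_i$. That is exactly how hypothesis (\ref{hyp-8}) of Theorem \ref{shvi-technical} is verified in Corollary \ref{shvi-contexts}. But the Fact you are asked to prove is the \emph{strong} statement: for a chain of every limit length $\alpha < \mu^+$ and every $p \in \gS(M_\alpha)$, the whole type $p$ fails to split over some $M_i$. Your argument cannot reach this. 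When $p \in \gS(M_\alpha)$ splits over $M_i$, the witnessing models $N_1,N_2$ and isomorphism $f$ can sit anywhere in $[M_i,M_\alpha]$, not inside $M_{i+1}$, so the branching is not aligned with the given chain and cannot be iterated along it; the tree argument only controls $p\rest M_{i+1}$. And the cardinal bookkeeping ($2^\alpha > \mu$ while the tree union still has size $\mu$) holds only for a thin band of $\alpha$, whereas the conclusion must range over all limit $\alpha < \mu^+$.

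The real content of the proof --- which your proposal omits entirely --- is the passage from weak universal local character at one $\sigma$ to strong universal local character at all $\alpha$. The paper isolates two intermediate properties, \emph{universal continuity} and \emph{no $\gamma$-limit alternations}, shows in Lemma \ref{forking-calc-lem-part1} and Proposition \ref{forking-calc-lem-part3} that together with weak local character at some $\sigma$ they yield the upgrade, and then derives them from categoricity (Lemma \ref{technical-solv-lem}) by embedding a $\mu^+$-chain into an EM model $\EM_\tau(\mu^+,\Phi)$, applying Fodor's lemma for continuity, and invoking Shelah's ZFC club-guessing on $S^{\mu^+}_\alpha$ for $\alpha < \mu$ (Fact \ref{club-guess-fact-1}) for no-alternations. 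You also misplace the role of $\Diamond_{S^{\mu^+}_{\cf\mu}}$ in part (\ref{shvi-known}): it is not a prediction device inside the tree but is spent in \cite{shvi635}, upstream of everything else, to manufacture the amalgamation substitutes (density of amalgamation bases, universal extensions, limit models being amalgamation bases) over which the whole construction runs. Finally, the singular-$\sigma$ difficulty the paper repairs (Lemma \ref{forking-calc-lem-part1}.(\ref{part13})) lives precisely at this weak-to-strong junction, and is not dissolved by ``thinning the chain to make $\alpha$ regular''; thinning addresses the conclusion side (Remark \ref{loc-rmk}) but not the hypothesis side, where weak local character may only be available at a singular $\sigma$.
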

\begin{remark}
Fact \ref{known-results} has applications to more ``concrete'' frameworks than AECs. One can deduce from it (and the aforementioned fact that no long splitting chains implies stability on a tail in the presence of tameness) an alternate proof that a first-order theory $T$ categorical above $|T|$ is superstable. More generally, one obtains the same statement for the class $K$ of models of a homogeneous diagram in $T$ \cite{sh3}. The later was open for $|T|$ uncountable and $K$ categorical in $\aleph_{\omega} (|T|)$ (see \cite[Section 4]{categ-primes-v4}).
\end{remark}

\subsection{Gaps in the Shelah-Villaveces proof}\label{gap-section}

In a preliminary version of \cite{bg-v11}, the proof of Theorem 6.8 referred to the argument used in \cite[Theorem 2.2.1]{shvi635}.  The referee of  \cite{bg-v11} insisted that the full argument necessary for Theorem 6.8 be included. After looking closely at the argument in \cite{shvi635}, we concluded that there was a small gap in the division of cases and a need to specify the exact use of the club guessing principle that they imply.

More specifically, Shelah and Villaveces \cite[Theorem 2.2.1]{shvi635} assume for a contradiction that no long splitting chains fails and can divide the situation into three cases, (a), (b), and (c). In the division into cases \cite[Claim 2.2.3]{shvi635}, just after the statement of property $\otimes_{i}$, Shelah and Villaveces claim that they can ``repeat the procedure above'' on a certain chain of models of length $\mu$. However the ``procedure above'' was used on a chain of length $\sigma$, where $\sigma$ is a \emph{regular} cardinal and regularity was used in the proof. As $\mu$ is a potentially singular cardinal, there is a problem.

Once the division of cases is done, Shelah and Villaveces prove that cases (a), (b), (c) contradict categoricity. When proving this for (b), they use a club-guessing principle for $\mu^+$ on the stationary set of points of cofinality $\sigma$ (see Fact \ref{club-guess-fact-1}). The principle only holds when $\sigma < \mu$, so the case $\sigma = \mu$ is missing.

\subsection{Statement and discussion of the main theorem}

In this paper, we give a generalized, detailed, and corrected proof of Fact \ref{known-results} that does not rely on any of the material in \cite{shvi635}. The key definitions are given at the start of the next section and the first seven hypotheses are collected in Hypothesis \ref{hyp}.

\begin{theorem}[Main Theorem]\label{shvi-technical}
  \emph{If}:

  \begin{enumerate}
  \item\label{hyp-1} $\K$ is an AEC.
  \item\label{hyp-2} $\mu \ge \LS (\K)$.
  \item\label{hyp-3} For every $M \in \K_{\mu}$, there exists an amalgamation base $M' \in \K_{\mu}$ such that $M \lea M'$.
  \item\label{hyp-4} For every amalgamation base $M \in \K_{\mu}$, there exists an amalgamation base $M' \in \K_{\mu}$ such that $M'$ is universal over $M$.
  \item\label{hyp-5} Every limit model in $\K_{\mu}$ is an amalgamation base.
  \item\label{hyp-6} $\adnf$ is as in Definition \ref{indep-def} with $\K^\ast$ the class of amalgamation bases in $\K_\mu$ (ordered with the strong substructure relation inherited from $\K$).
  \item\label{hyp-7} $\adnf$ satisfies invariance $(I)$ and monotonicity $(M)$.
  \item\label{hyp-8} $\adnf$ has weak universal local character at \emph{some} cardinal $\sigma < \mu^+$.
  \item\label{hyp-9} $\K$ has an Ehrenfeucht-Mostowski (EM) blueprint $\Phi$ with $|\tau (\Phi)| \le \mu$ such that every $M \in \K_{[\mu, \mu^+]}$ embeds inside $\EM_{\tau} (\mu^+, \Phi)$ (where we write $\tau := \tau (\K)$).
  \end{enumerate}

  \emph{Then} $\adnf$ has \emph{strong} universal local character at \emph{all} limit ordinals $\alpha < \mu^+$.
\end{theorem}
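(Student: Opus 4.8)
The plan is a proof by contradiction, running the Shelah-Villaveces argument in the present abstract setting and repairing their case division with the help of hypothesis \ref{hyp-8}. The first step is a reduction to regular cardinals: strong universal local character at a limit ordinal $\alpha < \mu^+$ follows from strong universal local character at $\cf \alpha$. Indeed, given an increasing continuous chain $\seq{M_i : i \le \alpha}$ in $\K^\ast$ with $M_{i+1}$ universal over $M_i$, fix a continuous cofinal $\seq{j_\xi : \xi \le \cf \alpha}$ in $\alpha$ and pass to the subchain $\seq{M_{j_\xi} : \xi \le \cf \alpha}$; it is again increasing continuous, its successors are universal (an extension of a universal extension is universal), and a model of the subchain over which a given $p \in \gS(M_\alpha)$ does not fork is, being some $M_{j_\xi}$, one of the original $M_i$ with $i < \alpha$. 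Since every limit ordinal below $\mu^+$ has cofinality a regular cardinal $\le \mu$, it suffices to prove strong universal local character at every regular $\theta \le \mu$.

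Suppose then that $\adnf$ fails strong universal local character at some regular $\theta \le \mu$, witnessed by an increasing continuous $\seq{M_i : i \le \theta}$ in $\K^\ast$ with $M_{i+1}$ universal over $M_i$ and a type $p^\ast \in \gS(M_\theta)$ forking over every $M_i$. If $\theta = \sigma$ this already contradicts \ref{hyp-8}, so assume $\theta \ne \sigma$. The recurring bookkeeping device is monotonicity $(M)$: an extension of a type forking over a model $M$ still forks over $M$, for otherwise, by $(M)$, its restriction would not fork over $M$ either. Hence in any increasing continuous chain of amalgamation bases $\seq{N_j : j \le \delta}$ carrying a coherent increasing chain of types $q_j \in \gS(N_j)$ with each $q_{j+1}$ forking over $N_j$, the top type $q_\delta = \bigcup_{j < \delta} q_j$ forks over every $N_j$ with $j < \delta$. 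So once such a ``bad chain'' of length $\mu^+$ is in hand the proof closes: pick any $\delta < \mu^+$ with $\cf \delta = \sigma$ (possible since $\sigma \le \mu$), and restrict $\seq{N_j, q_j}$ to a continuous cofinal subsequence of $\delta$ of order type $\sigma$; this is a bad $\sigma$-chain, contradicting \ref{hyp-8}.

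The heart of the argument is therefore to construct a bad chain of length $\mu^+$ out of the bad $\theta$-chain. Although only a bad $\sigma$-chain is ultimately needed, I do not see how to produce one directly from the $\theta$-chain; the natural route is to inflate to length $\mu^+$, where the Ehrenfeucht-Mostowski model of \ref{hyp-9} and club guessing on $\mu^+$ are available, and only then restrict. Concretely, I would run an induction of length $\mu^+$ inside $\C^\ast := \EM_\tau(\mu^+, \Phi)$ --- which is legitimate by \ref{hyp-9}, since every model of size in $[\mu, \mu^+]$ embeds into $\C^\ast$, so all the $N_j$ and the auxiliary models fit inside one ambient structure --- splicing in, at the stages of a club $C \subseteq \mu^+$, a copy of the $\theta$-configuration (the chain $\seq{M_i}$ together with $p^\ast$), transported by invariance $(I)$, chosen so as to force the type under construction to fork again over the current model rather than ceasing to fork once it has climbed $\theta$ steps. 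To make the splicing bite at enough stages I would fix in advance a club-guessing sequence on a stationary $S \subseteq \mu^+$ and carry out the splicing along it; what makes this work is that $\C^\ast$ is finitely generated over its index set $\mu^+$, so that the Galois type over any $N_j$ of any element of $\C^\ast$ is determined by finitely many indices and by the EM structure. This both bounds the Galois types in play --- the stability that keeps the induction fed --- and lets the club guessing force the spliced copies to line up.

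The main obstacle is exactly this splicing/club-guessing step, and it is the precise location of the gap in \cite[Claim 2.2.3]{shvi635}: there the ``procedure above'' was repeated on a chain of length $\mu$ although the procedure used regularity of the length, and the club guessing was taken on $S^{\mu^+}_\theta$, which exists only for $\theta < \mu$, leaving the case $\theta = \mu$ (with $\mu$ regular) uncovered. The point of assuming \ref{hyp-8}, rather than trying to derive local character from scratch, is that it relieves us of this: we need not reconstruct a bad $\theta$-chain nor guess on $S^{\mu^+}_\theta$ --- it is enough to land on a bad $\sigma$-chain for the one fixed cardinal $\sigma$, which leaves room to run the guessing on a stationary set where club guessing is available and to \emph{stretch}, not merely copy, the $\theta$-configuration along it. The remaining technical work is to check that the stretching preserves the EM control of Galois types, and that the length-$\mu^+$ chain produced genuinely has universal successors and is continuous at the guessed points.
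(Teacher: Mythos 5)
Your proposal correctly identifies the reduction to regular cofinalities, the observation (via monotonicity $(M)$) that a chain of types $q_j$ with each $q_{j+1}$ forking over $N_j$ has a top type forking over every $N_j$, and the fact that only weak local character at the single fixed $\sigma$ needs to be contradicted. But the heart of your argument---building a coherent ``bad chain'' $\seq{N_j, q_j : j < \mu^+}$ in which $q_{j+1}$ forks over $N_j$ for every $j$, then restricting to a cofinal $\sigma$-subsequence of some $\delta$ of cofinality $\sigma$---has a genuine gap that you do not fill. The framework only provides invariance $(I)$, monotonicity $(M)$, and weak universal local character; none of these supply the \emph{existence of forking extensions}, so there is nothing that lets you continue the chain at successor stages once a spliced-in copy of the $\theta$-configuration is exhausted. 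The splicing itself has a coherence problem: after $\theta$ steps the cumulative type sits over a copy of $M_\theta$ and looks like $p^\ast$, while to splice in a fresh copy of the configuration you would need it to look like $p^\ast \rest M_0$ over a copy of $M_0$; neither the isomorphism $M_\theta \cong M_0$ nor the matching of the types is given. So the bad $\mu^+$-chain is never actually constructed, and without it the restriction argument has nothing to act on.

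The paper's route circumvents precisely this difficulty by \emph{not} building a coherent chain of types. Instead, it isolates two auxiliary properties---universal continuity and no limit alternations---and proves by pure forking calculus (Lemma \ref{forking-calc-lem-part1}, Lemma \ref{forking-calc-lem-part0}, Proposition \ref{forking-calc-lem-part3}) that these, together with weak local character at one $\sigma$, yield strong local character everywhere; it then establishes continuity and no alternations from the EM hypothesis. In Lemma \ref{technical-solv-lem}, the chain $\bar N(\bar C)$ carries at each $i \in S^{\mu^+}_\alpha$ a \emph{separate} realizing element $a_i$ of the transported type $g_i(p)$---the $a_i$ are not coherent---and the contradiction is extracted by comparing $\gtp(a_{i_1}/N_{i_1})$ with $\gtp(a_{i_2}/N_{i_1})$: the EM indiscernibility forces them equal, while the forking behaviors (obtained from the continuity or alternation hypothesis on $p$) force them distinct. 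Finally, the $\theta = \mu$ case that defeats club guessing is resolved not by ``stretching'' the configuration but by the case split in the proof of Theorem \ref{shvi-technical}: for $\alpha \ge \sigma$ one already has weak local character at $\alpha$ (Remark \ref{loc-rmk}), so no alternations there follow from Lemma \ref{forking-calc-lem-part0} with no set theory at all; club guessing is invoked only when $\alpha < \sigma \le \mu$, where it is available. Your proposal is missing both the continuity/alternations decomposition and the $\alpha$-versus-$\sigma$ case split that together make the argument close.
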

\begin{remark}
  As in \cite{shvi635}, when we say that $M$ is an \emph{amalgamation base} we mean that it is an amalgamation base in the class $\K_{\|M\|}$, i.e.\ we do \emph{not} require that larger models can be amalgamated over $M$.
\end{remark}

Some of the hypotheses of Theorem \ref{shvi-technical} may appear technical. Let us give a little more motivation. 
\begin{itemize}
	\item Hypotheses (\ref{hyp-3}-\ref{hyp-5}) are the statements that Shelah and Villaveces derive (assuming instances of diamond) from categoricity and no maximal models. It is well known that they hold in AECs with amalgamation. 
	\item Hypothesis (\ref{hyp-4}) implies stability in $\mu$. 
	\item Hypothesis (\ref{hyp-8}) can be seen as a consequence of stability (akin to ``every type does not fork over a set of size at most $\mu$''). 
	\item Hypothesis (\ref{hyp-9}) follows from categoricity (see the proof of Corollary \ref{shvi-contexts}).  In fact, it is strictly weaker: for a first-order theory $T$, (\ref{hyp-9}) holds if and only if $T$ is superstable by \cite[Section 5]{gv-superstability-v4}.
\end{itemize}

How are the gaps mentioned in Section \ref{gap-section} addressed in our proof of Theorem \ref{shvi-technical}? The first gap (in the division into cases) is fixed in Lemma \ref{forking-calc-lem-part1}.(\ref{part13}). The second gap (in the use of the club guessing principle) is addressed here by a division into cases in the proof of Theorem \ref{shvi-technical} at the end of this paper: there we use Lemma \ref{technical-solv-lem} only when $\alpha < \sigma$.

Before starting to prove Theorem \ref{shvi-technical}, we give several contexts in which its hypotheses hold. This shows in particular that Fact \ref{known-results} follows from Theorem \ref{shvi-technical}.

\begin{corollary}\label{shvi-contexts}
  Let $\K$ be an AEC with arbitrarily large models. Let $\LS (\K) \le \mu < \lambda$ and assume that $\K$ is categorical in $\lambda$ and $\K_{<\lambda}$ has no maximal models. Then:

  \begin{enumerate}
  \item If $\diamondsuit_{S_{\cf{\mu}}^{\mu^+}}$ holds, then the hypotheses of Theorem \ref{shvi-technical} hold with $\adnf$ being non-$\mu$-splitting.
  \item If $\K_{\mu}$ has amalgamation, then:
    \begin{enumerate}
      \item The hypotheses of Theorem \ref{shvi-technical} hold with $\adnf$ being non-$\mu$-splitting.
      \item If $\kappa \in (\LS (\K), \mu)$ is such that $\K$ does not have the weak $\kappa$-order property, then the hypotheses of Theorem \ref{shvi-technical} hold with $\adnf$ being $(<\kappa)$-coheir (see \cite{bg-v11}).
    \end{enumerate}
    \end{enumerate}
\end{corollary}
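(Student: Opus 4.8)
The plan is to check, in each of the three listed contexts, that all nine hypotheses of Theorem~\ref{shvi-technical} are satisfied. Hypotheses (\ref{hyp-1}) and (\ref{hyp-2}) are part of the standing assumptions of the corollary, and hypotheses (\ref{hyp-3})--(\ref{hyp-5}) and (\ref{hyp-9}) speak only about $\K$, so I would verify them once in each context; hypotheses (\ref{hyp-6})--(\ref{hyp-8}) depend on the choice of $\adnf$ and would be verified separately for non-$\mu$-splitting and (in the last context) for $(<\kappa)$-coheir, with $\K^\ast$ taken to be the class of amalgamation bases in $\K_\mu$.

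The bulk of the work is hypothesis (\ref{hyp-9}), which is common to all three contexts. Since $\K$ has arbitrarily large models, Shelah's presentation theorem provides a first-order expansion $\tau_1 \supseteq \tau := \tau(\K)$ with $|\tau_1| \le \LS(\K) \le \mu$ and an EM blueprint $\Phi$ with $\tau(\Phi) = \tau_1$ such that $\EM_\tau(L, \Phi) \in \K$ for every linear order $L$; in particular $|\tau(\Phi)| \le \mu$. By categoricity in $\lambda$, $\EM_\tau(\lambda, \Phi)$ is isomorphic to the unique model of cardinality $\lambda$, and since $\K_{<\lambda}$ has no maximal models, every $M \in \K_{[\mu,\mu^+]}$ extends to a model of cardinality $\lambda$ and hence embeds into $\EM_\tau(\lambda, \Phi)$. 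If $\|M\| = \mu$ the image of such an embedding is generated by a subset of $\lambda$ of order type $< \mu^+$, so $M$ already embeds into $\EM_\tau(\mu^+, \Phi)$. The case $\|M\| = \mu^+$ (which is only an issue when $\mu^+ < \lambda$) reduces to showing that $\EM_\tau(\mu^+, \Phi)$ is universal in $\K_{\le \mu^+}$ (or, filtering $M$ by models of size $\mu$, that $\EM_\tau(\mu^+, \Phi)$ is sufficiently model-homogeneous); here I would combine the standard analysis of EM models under categoricity (see, e.g., \cite{sh394, baldwinbook09}) with the extra hypothesis available in each context --- in context (1) the amalgamation-base structure that Shelah and Villaveces extract from $\diamondsuit$ \cite{shvi635}, and in context (2) amalgamation in $\K_\mu$. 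I expect this to be the main obstacle, chiefly because it must be carried out with \emph{no} cofinality restriction on $\lambda$ --- exactly the restriction that \cite[Lemma 6.3]{sh394} could not avoid --- and because its content for a first-order theory is precisely superstability, by \cite[Section 5]{gv-superstability-v4}.

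For hypotheses (\ref{hyp-3})--(\ref{hyp-5}): in context (1) these are exactly the consequences that Shelah and Villaveces draw from categoricity, no maximal models, and the relevant instance of $\diamondsuit$ \cite{shvi635}. In context (2), amalgamation in $\K_\mu$ makes every model in $\K_\mu$ an amalgamation base, so (\ref{hyp-3}) and (\ref{hyp-5}) are immediate; for (\ref{hyp-4}) I would invoke stability in $\mu$ --- a standard consequence of categoricity in $\lambda > \mu$ together with amalgamation and no maximal models --- which, with amalgamation and joint embedding (both of which hold in $\K_{<\lambda}$ under these assumptions), yields a universal extension of each amalgamation base inside $\K_\mu$.

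Finally, hypotheses (\ref{hyp-6})--(\ref{hyp-8}). For non-$\mu$-splitting over $\K^\ast$: (\ref{hyp-6}) is immediate from the definition of $\mu$-splitting; (\ref{hyp-7}) is routine, invariance being clear and monotonicity following because a witness to $\mu$-splitting over a larger base also witnesses it over a smaller one; and (\ref{hyp-8}) is the classical statement that, under stability in $\mu$ (available from (\ref{hyp-4})), non-$\mu$-splitting has weak universal local character at some $\sigma < \mu^+$ (cf.\ \cite{sh394, ss-tame-jsl, vandierennomax}). For $(<\kappa)$-coheir in the last context, all of (\ref{hyp-6})--(\ref{hyp-8}) come from the development in \cite{bg-v11}: given $\kappa \in (\LS(\K), \mu)$, amalgamation in $\K_\mu$, and the failure of the weak $\kappa$-order property, $(<\kappa)$-coheir is an independence relation of the form required by Definition~\ref{indep-def} and satisfies invariance and monotonicity, and stability (again a consequence of categoricity, via (\ref{hyp-4})) gives weak universal local character at some $\sigma < \mu^+$. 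Once all nine hypotheses are in place, Theorem~\ref{shvi-technical} applies in each context, and, reading off the conclusion, Fact~\ref{known-results} follows.
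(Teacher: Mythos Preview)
Your overall plan---checking hypotheses (\ref{hyp-1})--(\ref{hyp-9}) one by one---is exactly what the paper does, and your treatment of (\ref{hyp-3})--(\ref{hyp-8}) matches the paper's closely (though for weak local character of non-$\mu$-splitting the paper cites the more precise bound ``any $\sigma$ with $2^\sigma > \mu$'', via case~(c) of \cite{shvi635} or \cite[Lemma 12.2]{baldwinbook09}).

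The real issue is hypothesis~(\ref{hyp-9}), and here your proposal has a genuine gap. You correctly reduce to: any $M \in \K_{\mu^+}$ embeds into $\EM_\tau(\alpha, \Psi)$ for some ordinal $\alpha < \mu^{++}$, but then propose to show that $\EM_\tau(\mu^+, \Psi)$ itself is universal (or model-homogeneous) in $\K_{\le \mu^+}$, using whatever extra structure is available in each context. That route is at best hard and at worst circular: universality of $\EM_\tau(\mu^+, \Psi)$ over arbitrary $\K_{\mu^+}$-models is close to what you are trying to prove, and neither diamond nor amalgamation in $\K_\mu$ gives you enough model-homogeneity at $\mu^+$ for free. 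The paper's argument avoids this entirely and is uniform across all three contexts: instead of proving universality for the \emph{given} blueprint, one \emph{changes} the blueprint. Any ordinal $\alpha < \mu^{++}$ embeds as a linear order into $\fct{<\omega}{\mu^+}$ (this is \cite[Claim 15.5]{baldwinbook09}), and since $\{\fct{<\omega}{I} : I \text{ a linear order}\}$ is itself an AEC one can compose blueprints to obtain a new $\Phi$ with $\EM_\tau(I, \Phi) = \EM_\tau(\fct{<\omega}{I}, \Psi)$. Then $M$ embeds into $\EM_\tau(\alpha, \Psi) \lea \EM_\tau(\fct{<\omega}{\mu^+}, \Psi) = \EM_\tau(\mu^+, \Phi)$, and you are done---with no appeal to amalgamation, diamond, or the cofinality of $\lambda$. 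This blueprint-composition trick is the missing idea in your treatment of~(\ref{hyp-9}).
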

\begin{proof}
  Fix an EM blueprint $\Psi$ for $\K$ (with $|\tau (\Psi)| \le \mu$). We first show that there exists an EM blueprint $\Phi$ with $|\tau (\Phi)| \le \mu$ such that any $M \in \K_{[\mu, \mu^+]}$ embeds inside $\EM_{\tau} (\mu^+, \Phi)$. Let $M \in \K_{[\mu, \mu^+]}$. Using no maximal models and categoricity,  $M$ embeds inside $\EM_{\tau} (\lambda, \Psi)$, and hence inside $\EM_{\tau} (S, \Psi)$ for some $S \subseteq \lambda$ with $|S| \le \mu^+$. Therefore $M$ also embeds inside $\EM_{\tau} (\alpha, \Psi)$, where $\alpha := \otp (S) < \mu^{++}$. Now it is well known (see e.g.\ \cite[Claim 15.5]{baldwinbook09}) that $\alpha$ embeds inside $\EM_{\tau} (\fct{<\omega}{\mu^{+}}, \Phi)$. The class $\{\fct{<\omega}{I} \mid I \text { is a linear order}\}$ is an AEC, therefore by composing EM blueprints there exists an EM blueprint $\Phi$ for $\K$ such that $|\tau (\Phi)| \le \mu$ and $\EM_{\tau} (I, \Phi) = \EM_{\tau} (\fct{<\omega}{I}, \Psi)$ for any linear order $I$. In particular, $M$ embeds inside $\EM_{\tau} (\mu^{+}, \Phi)$, as desired.

  As for the hypotheses on density of amalgamation bases, existence of universal extension, and limit models being amalgamation bases, in the first context this is proven in \cite{shvi635} (note that $\diamondsuit_{S_{\cf{\mu}}^{\mu^+}}$ implies $2^{\mu} = \mu^+$). When $\K_{\mu}$ has full amalgamation, existence of universal extension is due to Shelah. It is stated (but not proven) in \cite[Lemma 2.2]{sh394}; see \cite[Lemma 10.5]{baldwinbook09} for a proof.

  In all the contexts given, it is trivial that $\adnf$ satisfies $(I)$ and $(M)$. In the first context, it can be shown that non $\mu$-splitting has weak universal local character at any $\sigma < \mu^+$ such that $2^{\sigma} > \mu$ (see the proof of case (c) in \cite[Theorem 2.2.1]{shvi635} or \cite[Lemma 12.2]{baldwinbook09}). Of course, this also holds when $\K_{\mu}$ has full amalgamation. As for $(<\kappa)$-coheir, it has weak universal local character at any $\sigma < \mu^+$ such that $2^\sigma > \kappa$. This is given by the proof of \cite[Theorem 6.8]{bg-v11} (note that using a back and forth argument, one can assume without loss of generality that any $M_{i + 1}$ in the chain is $\kappa$-saturated).
\end{proof}

\subsection{Other advantages of the main theorem}

As should be clear from Corollary \ref{shvi-contexts}, another advantage of the main theorem is that it separates the combinatorial set theory from the model theory (it holds in ZFC) and also shows that there is nothing special about splitting in \cite{shvi635}.

Some results here are of independent interest. For example, any independence relation satisfying invariance and monotonicity has (assuming categoricity) a certain continuity property (see Lemma \ref{technical-solv-lem}).

\subsection{Acknowledgments}

We thank the referee for comments that helped improve the presentation of this work.

This paper was written while the fourth author was working on a Ph.D.\ thesis under the direction of the second author at Carnegie Mellon University and he would like to thank Professor Grossberg for his guidance and assistance in his research in general and in this work specifically.

\section{Proof of the main theorem}

We now define the weak framework for independence that we use.

\begin{defin}\label{indep-def}
Let $\K^\ast$ be an abstract class\footnote{That is, a partial order $(K^\ast, \leas)$ such that $K^\ast$ is a class of structures in a fixed vocabulary closed under isomorphisms, $\leas$ is invariant under isomorphisms, and $M \leas N$ implies that $M$ is a substructure of $N$.} and $\adnf$ be a 4-ary relation such that if $a \adnf_{M_0}{}^N M$ holds, then $M_0 \leas M \leas N$ are all in $\K^\ast$ and $a \in |N|$.
\begin{enumerate}
\item The following are several properties we will assume about $\adnf$ (but we will always mention when we assume them).
  \begin{enumerate}
    	\item $\adnf$ has \emph{invariance} $(I)$ if it is preserved under isomorphisms: if $a \adnf_{M_0}{}^N M$ and $f: N \cong N'$, then $f (a) \adnf_{f[M_0]}{}^{N'} f[M]$.
	\item $\adnf$ has \emph{monotonicity} $(M)$ if: 
	\begin{enumerate}
		\item If $a \adnf_{M_0}{}^N M$, $M_0 \leas M_0' \leas M' \leas M$, and $N \leas N'$, then $a \dnf_{M'_0} {}^{N'} M'$; and:
		\item If $a \adnf_{M_0}{}^N M$, $N' \leas N$ is such that $M \leas N'$ and $a \in |N'|$, then $a \adnf_{M_0}{}^{N'} M$.
	\end{enumerate}
  \end{enumerate}
\item $(I)$ and $(M)$ mean that this relation is really about Galois types, so we write \emph{$\gtp (a/M; N)$ does not $*$-fork over $M_0$} for $a \adnf_{M_0}{}^N M$.
\item\label{weak-local} For a limit ordinal $\alpha$, $\adnf$ has \emph{weak universal local character at $\alpha$} if for any increasing continuous sequence $\seq{M_ i \in \K^\ast \mid i \leq \alpha}$ and any type $p \in \gS (M_\alpha)$, if $M_{i + 1}$ is universal over $M_i$ for each $i < \alpha$, then there is some $i_0 < \alpha$ such that $p \rest M_{i_0+1}$ does not $*$-fork over $M_{i_0}$.
\item\label{strong-local} For a limit ordinal $\alpha$, $\adnf$ has \emph{strong universal local character at $\alpha$} if for any increasing continuous sequence $\seq{M_ i \in \K^\ast \mid i \leq \alpha}$ and any type $p \in \gS (M_\alpha)$, if $M_{i + 1}$ is universal over $M_i$ for each $i < \alpha$, then there is some $i_0 < \alpha$ such that $p$ does not $*$-fork over $M_{i_0}$.
\end{enumerate}
\end{defin}
\begin{remark}\label{loc-rmk} \
  \begin{enumerate}
  \item In the setup of Fact \ref{known-results}.(\ref{shvi-known}), non-$\mu$-splitting on the class $\K^\ast$ of amalgamation bases of cardinality $\mu$ will have $(I)$ and $(M)$, see Fact \ref{shvi-contexts}.
  \item If $\alpha < \beta$ are limit ordinals and $\adnf$ has weak universal local character at $\alpha$, then $\adnf$ has weak universal local character at $\beta$, but this need not hold for strong universal local character (if say $\cf{\beta} < \cf{\alpha}$).
  \item If $\adnf$ has $(M)$ and $\adnf$ has strong universal local character at $\cf{\alpha}$, then $\adnf$ has strong universal local character at $\alpha$.
  \item If $\adnf$ has $(M)$, strong universal local character at $\alpha$ implies weak universal local character at $\alpha$.
  \item If (as will be the case in this note) $\K^\ast$ is a class of structures of a fixed size $\mu$, then we only care about the properties when $\alpha < \mu^+$.
  \end{enumerate}
\end{remark}

We collect the first seven hypotheses of Theorem \ref{shvi-technical} into a hypothesis that will be assumed for the rest of the paper.

\begin{hyp}\label{hyp}\
  \begin{enumerate}
\item\label{hyp-hyp-1} $\K$ is an AEC.
  \item\label{hyp-hyp-2} $\mu \ge \LS (\K)$.
  \item\label{hyp-hyp-3} For every $M \in \K_{\mu}$, there exists an amalgamation base $M' \in \K_{\mu}$ such that $M \lea M'$.
  \item\label{hyp-hyp-4} For every amalgamation base $M \in \K_{\mu}$, there exists an amalgamation base $M' \in \K_{\mu}$ such that $M'$ is universal over $M$.
  \item\label{hyp-hyp-5} Every limit model in $\K_{\mu}$ is an amalgamation base.
  \item\label{hyp-hyp-6} $\adnf$ is as in Definition \ref{indep-def} with $\K^\ast$ the class of amalgamation bases in $\K_\mu$ (ordered with the strong substructure relation inherited from $\K$).
  \item\label{hyp-hyp-7} $\adnf$ satisfies invariance $(I)$ and monotonicity $(M)$.

\end{enumerate}
\end{hyp}

The proof of Theorem \ref{shvi-technical} can be decomposed into two steps. First, we study two more variations on local character: continuity and absence of alternations. We show that if strong local character fails but enough weak local character holds, then there must be some failure of continuity, or some alternations. Second, we show that categoricity (or more precisely the existence of a universal EM model in $\mu^+$) implies continuity and absence of alternations. The first step uses the weak local character (but not categoricity, it is essentially forking calculus) but the second does not (but does use categoricity).

The precise definitions of continuity and alternations are as follows.

\begin{defin}
  Let $\K^\ast$ and $\adnf$ be as in Definition \ref{indep-def} and let $\alpha$ be a limit ordinal.

  \begin{enumerate}
  \item $\adnf$ has \emph{universal continuity at $\alpha$} if for any increasing continuous sequence $\seq{M_ i \in \K^\ast \mid i \leq \alpha}$ and any type $p \in \gS (M_\alpha)$, if for each $i < \alpha$ $M_{i + 1}$ is universal over $M_i$ and $p \rest M_i$ does not $\ast$-fork over $M_0$, then $p$ does not $\ast$-fork over $M_0$.
  \item For $\delta < \mu^+$ a limit, $\adnf$ has \emph{no $\delta$-limit alternations at $\alpha$} if for any increasing continuous sequence $\seq{M_ i \in \K^\ast \mid i \leq \alpha}$ with $M_{i + 1}$ $(\mu, \delta)$-limit over $M_i$ for all $i < \alpha$ and any type $p \in \gS (M_\alpha)$, there exists $i < \alpha$ such that the following fails: $p \rest M_{2i + 1}$ $\ast$-forks over $M_{2i}$ and $p \rest M_{2i + 2}$ does not $\ast$-fork over $M_{2i + 1}$. If this fails, we say that \emph{$\adnf$ has $\delta$-limit alternations at $\alpha$}.
  \end{enumerate}
\end{defin}

Note that the failure of universal continuity and no $\delta$-limit alternation correspond respectively to cases (a) and (b) in the proof of \cite[Theorem 2.2.1]{shvi635}. Case (c) there corresponds to failure of weak universal local character at $\mu$ (which is assumed to hold here, see (\ref{hyp-8}) of Theorem \ref{shvi-technical}).

The following technical lemmas and proposition implement the first step described after the statement of Hypothesis \ref{hyp}.  In particular, Proposition \ref{forking-calc-lem-part3} below says that if we can prove weak local character at some $\sigma$, continuity and no alternations at \emph{all} $\alpha$, then strong local character at all $\alpha$ follows. Lemma \ref{forking-calc-lem-part1} is a collection of preliminary steps toward proving Proposition \ref{forking-calc-lem-part3}. Lemma \ref{forking-calc-lem-part0} is used separately in the proof of the main theorem (it says that weak universal local character implies the absence of alternations).  Throughout, recall that we are assuming Hypothesis \ref{hyp}.

\begin{lemma}\label{forking-calc-lem-part0}
Let 
$\sigma < \mu^+$ be a (not necessarily regular) cardinal and $\delta < \mu^+$ be a limit ordinal.  If $\adnf$ has weak universal local character at $\sigma$, then $\adnf$ has no $\delta$-limit alternations at $\sigma$.
\end{lemma}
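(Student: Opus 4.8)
The plan is to argue by contradiction. Suppose $\adnf$ has $\delta$-limit alternations at $\sigma$: there is an increasing continuous sequence $\seq{M_i \in \K^\ast \mid i \le \sigma}$ with $M_{i+1}$ being $(\mu,\delta)$-limit over $M_i$ for each $i < \sigma$, together with a type $p \in \gS(M_\sigma)$, such that for every $i < \sigma$ the type $p \rest M_{2i+1}$ $\ast$-forks over $M_{2i}$ while $p \rest M_{2i+2}$ does not $\ast$-fork over $M_{2i+1}$. The idea is that this oscillation cannot survive restriction to the even-indexed models: along the subsequence $\seq{M_{2i} \mid i \le \sigma}$, weak universal local character must produce a non-forking \emph{successor} step somewhere, say between $M_{2i_0}$ and $M_{2i_0+2}$, and monotonicity then pushes that non-forking down to the step between $M_{2i_0}$ and $M_{2i_0+1}$ --- which is exactly a step the alternation hypothesis declares forking.

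Carrying this out, I would first verify that $\seq{M_{2i} \mid i \le \sigma}$ is an increasing continuous sequence in $\K^\ast$ with $M_{2(i+1)}$ universal over $M_{2i}$ for each $i < \sigma$. Each $M_{2i}$ lies in $\K^\ast$ since it is already a term of the original sequence (and $2i \le 2\sigma = \sigma$, using that $\sigma$ is an infinite cardinal, so the new sequence still terminates at $M_\sigma$ and $p \in \gS(M_\sigma)$); continuity at a limit $i$ holds because $j \mapsto 2j$ is a continuous ordinal operation, so $M_{2i} = \bigcup_{j<i} M_{2j}$. Universality of $M_{2i+2}$ over $M_{2i}$ is the one point that takes a short argument: $M_{2i+2}$ is $(\mu,\delta)$-limit, hence universal, over $M_{2i+1}$, and since $M_{2i} \leas M_{2i+1}$ with $M_{2i}$ an amalgamation base, any $N \in \K_\mu$ extending $M_{2i}$ can be amalgamated with $M_{2i+1}$ over $M_{2i}$, the amalgam cut down to size $\mu$ by L\"owenheim--Skolem (keeping $M_{2i+1}$ inside), and then embedded over $M_{2i+1}$ into $M_{2i+2}$; the composite embeds $N$ into $M_{2i+2}$ over $M_{2i}$.

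With this in place, apply weak universal local character at $\sigma$ to $\seq{M_{2i} \mid i \le \sigma}$ and $p$ to get $i_0 < \sigma$ with $p \rest M_{2i_0+2}$ not $\ast$-forking over $M_{2i_0}$. One application of monotonicity $(M)$ --- restricting the type from $M_{2i_0+2}$ down to $M_{2i_0+1}$ while keeping the base $M_{2i_0}$, which is legitimate since $M_{2i_0} \leas M_{2i_0} \leas M_{2i_0+1} \leas M_{2i_0+2}$ --- yields that $p \rest M_{2i_0+1}$ does not $\ast$-fork over $M_{2i_0}$, contradicting the alternation hypothesis at $i = i_0$. The only genuinely delicate step, then, is the verification that $M_{2i+2}$ is universal (not merely a limit) over $M_{2i}$, which quietly uses that $M_{2i}$ is an amalgamation base; the remainder is ordinal bookkeeping and a single use of $(M)$.
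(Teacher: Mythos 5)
Your proof is correct and follows exactly the same approach as the paper's one-line proof: apply weak universal local character to the even-indexed subchain $\seq{M_{2i} : i \le \sigma}$ and then invoke monotonicity $(M)$ to restrict the resulting non-forking statement from $p\rest M_{2i_0+2}$ to $p\rest M_{2i_0+1}$ over the base $M_{2i_0}$, contradicting the first conjunct of the alternation at $i_0$. One small simplification: your amalgamation argument for universality of $M_{2i+2}$ over $M_{2i}$ is more elaborate than needed, since $M_{2i+1}$ is already universal over $M_{2i}$ (being a $(\mu,\delta)$-limit over it) and universality over $M_{2i}$ passes trivially to the larger model $M_{2i+2}$.
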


\begin{proof}
Fix $\seq{M_i : i \le \alpha}$, $\delta$, $p$ as in the definition of having no $\delta$-limit alternations. Apply weak universal local character to the chain $\seq{M_{2i} : i \le \alpha}$.
\end{proof}

We now outline the proof of Proposition \ref{forking-calc-lem-part3}. Again, it may be helpful to remember that we will later prove that (in the context of Theorem \ref{shvi-technical}) continuity holds at all lengths and that there are no alternations.

Two important basic results are 
\begin{itemize}
	\item continuity together with weak local character imply strong local character at \emph{regular length} (Lemma \ref{forking-calc-lem-part1}.(\ref{part1})); and 
	\item it does not matter whether in the definition of weak and strong universal local character we require ``$M_{i + 1}$ limit over $M_i$'' or ``$M_{i + 1}$ universal over $M_i$,'' and the length of the limit models does not matter (Lemma \ref{forking-calc-lem-part1}.(\ref{part11})).
\end{itemize}
The first of these is proven by contradiction, and the second is a straightforward argument using universality. 

Assume for a moment we have strong universal local character at some limit length $\gamma$. Let us try to prove weak universal local character at (say) $\omega$ (then we can use the first basic result to get the strong version, assuming continuity). By the second basic result, we can assume we are given an increasing continuous sequence $\seq{M_n : n \le \omega}$ with $M_{n + 1}$ $(\mu, \gamma)$-limit over $M_n$ for all $n < \omega$ and $p \in \gS (M_\omega)$. By the strong universal local character assumption we know that $p \rest M_{n  +1}$ does not $\ast$-fork over some intermediate model between $M_n$ and $M_{n + 1}$, so if we assume that $p \rest M_{n + 1}$ $\ast$-forks over $M_n$ for all $n < \omega$, we will end up getting alternations. This is the essence of Lemma \ref{forking-calc-lem-part1}.(\ref{part2}).

Thus to prove strong universal local character at \emph{all} cardinals, it is enough to obtain it at \emph{some} cardinal. Fortunately in the hypothesis of Proposition \ref{forking-calc-lem-part3}, we are already assuming weak universal local character at some $\sigma$. If $\sigma$ is regular we are done by the first basic result, but unfortunately $\sigma$ could be singular. In this case Lemma \ref{forking-calc-lem-part1}.(\ref{part13}) (using Lemma \ref{forking-calc-lem-part1}.(\ref{part12}) as an auxiliary claim) shows that failure of strong universal local character at $\sigma$ implies alternations, even when $\sigma$ is singular.

\begin{lemma}\label{forking-calc-lem-part1}
Let $\alpha < \mu^+$ be a regular cardinal, $\sigma < \mu^+$ be a (not necessarily regular) cardinal, and $\delta < \mu^+$ be a limit ordinal.
  \begin{enumerate}
  	  \item\label{part1} If $\adnf$ has universal continuity at $\alpha$ and weak universal local character at $\alpha$, then $\adnf$ has strong universal local character at $\alpha$.
  \item\label{part11} We obtain an equivalent definition of weak [strong] universal local character at $\sigma$, if in Definition \ref{indep-def}.(\ref{weak-local}) [\ref{indep-def}.(\ref{strong-local})] we ask in addition that ``$M_{i + 1}$ is $(\mu, \delta)$-limit over $M_i$'' for all $i < \sigma$. 
  \item\label{part12} Assume that $\adnf$ has weak universal local character at $\sigma$. Let $\seq{M_i : i \le \sigma}$ be increasing continuous in $\K^\ast$ with $M_{i + 1}$ universal over $M_i$ for all $i < \sigma$. For any $p \in \gS (M_\sigma)$ there exists a \emph{successor} $i < \sigma$ such that $p \rest M_{i + 1}$ does not $\ast$-fork over $M_i$.
  \item\label{part13} If $\adnf$ has universal continuity at $\sigma$, weak universal local character at $\sigma$, and no $\delta$-limit alternations at $\omega$, then $\adnf$ has strong universal local character at $\sigma$.
  \item\label{part2} Assume that $\adnf$ has strong universal local character at $\sigma$. If $\adnf$ does \emph{not} have weak universal local character at $\alpha$, then $\adnf$ has $\sigma$-limit alternations at $\alpha$.
  \end{enumerate}
\end{lemma}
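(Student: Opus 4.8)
The plan is to prove (\ref{part1})--(\ref{part2}) in the listed order, each item using the earlier ones, and to reduce everything to a single ``forking calculus'' observation that follows from $(M)$: nonforking of a fixed type over a fixed model is preserved when the top model shrinks and the base grows, so that along any increasing continuous chain $\langle M_i\rangle$ and for fixed $p$ and $M_j$ the set of indices $k$ with $p\rest M_k$ $\ast$-forking over $M_j$ is upward closed (a final segment). For (\ref{part1}), suppose strong universal local character at $\alpha$ fails, witnessed by $\langle M_i : i\le\alpha\rangle$ increasing continuous with $M_{i+1}$ universal over $M_i$ and $p\in\gS(M_\alpha)$ which $\ast$-forks over every $M_i$. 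For each $i<\alpha$ apply universal continuity at $\alpha$ to the reindexed tail $\langle M_{i+\beta} : \beta\le\alpha\rangle$ (here $i+\alpha=\alpha$ as $\alpha$ is regular): since $p$ $\ast$-forks over $M_i$, continuity is violated, so some $p\rest M_{i+\beta}$ ($\beta<\alpha$) $\ast$-forks over $M_i$; with the final-segment observation, let $\beta_i$ be least with $i<\beta_i<\alpha$ and $p\rest M_{\beta_i}$ $\ast$-forking over $M_i$. Set $i_0=0$, $i_{\xi+1}=\beta_{i_\xi}$, and $i_\xi=\sup_{\zeta<\xi}i_\zeta$ at limits; regularity of $\alpha$ makes $\langle i_\xi : \xi<\alpha\rangle$ strictly increasing, continuous, and cofinal in $\alpha$, so $\langle M_{i_\xi} : \xi\le\alpha\rangle$ (with $i_\alpha:=\alpha$) is again an increasing continuous chain with universal successor steps, $p\in\gS(M_{i_\alpha})$, and $p\rest M_{i_{\xi+1}}$ $\ast$-forks over $M_{i_\xi}$ for every $\xi$ -- contradicting weak universal local character at $\alpha$.

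Parts (\ref{part11}) and (\ref{part12}) are bookkeeping. For (\ref{part11}), one implication is immediate since $(\mu,\delta)$-limits are universal; for the other, interpolate into a given chain with universal steps a parallel chain with $(\mu,\delta)$-limit steps -- possible by Hypotheses (\ref{hyp-hyp-3})--(\ref{hyp-hyp-5}), which provide $(\mu,\delta)$-limit extensions inside $\K^\ast$ -- and carry the (weak or strong) local character conclusion back across the resulting isomorphism via $(I)$ and $(M)$. Part (\ref{part12}) reruns the proof of (\ref{part1}) at successor stages only: assuming $p\rest M_{i+1}$ $\ast$-forks over $M_i$ for every successor $i<\sigma$, the final-segment observation makes $i+1$ the least index at which $p$ $\ast$-forks over $M_i$, and using (\ref{part11}) to also arrange the limit stages one builds a chain contradicting weak universal local character at $\sigma$; the fact that the good index can be taken a \emph{successor} is exactly what makes the cofinality bookkeeping in (\ref{part13}) work.

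Parts (\ref{part13}) and (\ref{part2}) both convert a failure of (strong, resp.\ weak) local character into alternations; (\ref{part13}) is the case that is needed when $\sigma$ is singular and (\ref{part1}) is unavailable. Suppose strong universal local character at $\sigma$ fails; by (\ref{part11}) take a witnessing chain $\langle M_i : i\le\sigma\rangle$ with $(\mu,\delta)$-limit steps (canonical chains $\langle M^i_k : k\le\delta\rangle$ inside each block). We build $\langle N_n : n\le\omega\rangle$ increasing continuous with $N_{n+1}$ a $(\mu,\delta)$-limit over $N_n$, with coherent $\leas$-embeddings $f_n$ into $M_\sigma$ whose images are models of the original chain, so that the pullback $q$ of $p$ along $f_\omega=\bigcup_n f_n$ $\ast$-forks over the even $N_n$ and does not $\ast$-fork over the odd ones. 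Given $N_{2n}$ with $f_{2n}[N_{2n}]=M_m$: since $p$ $\ast$-forks over $M_m$, continuity at $\sigma$ and the final-segment observation give a least $r$ with $p\rest M_{m+r}$ $\ast$-forking over $M_m$; apply (\ref{part12}) to the reindexed tail $\langle M_{m+r+s} : s\le\sigma\rangle$ to get a \emph{successor} $s^*<\sigma$ with $p\rest M_{m+r+s^*+1}$ not $\ast$-forking over $M_{m+r+s^*}$. Put $f_{2n+1}[N_{2n+1}]:=M_{m+r+s^*}$, $f_{2n+2}[N_{2n+2}]:=M_{m+r+s^*+1}$; then $p\rest M_{m+r+s^*}$ $\ast$-forks over $M_m$ (it lies above $M_{m+r}$) while $p\rest M_{m+r+s^*+1}$ does not $\ast$-fork over $M_{m+r+s^*}$, and since $M_{m+r+s^*}$ over $M_m$ is a composition of $(\mu,\delta)$-limit steps over the ordinal $r+s^*$, which is a successor because $s^*$ is, it is a $(\mu,\delta\cdot(r+s^*))$-limit of cofinality $\cf\delta$ and hence a $(\mu,\delta)$-limit -- using that limit models of the same cofinality over an amalgamation base are isomorphic over it -- so the $N$-steps are genuine $(\mu,\delta)$-limits. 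The $\omega$-step recursion closes ($N_\omega=\bigcup_n N_n$ is a limit model, hence an amalgamation base by Hypothesis (\ref{hyp-hyp-5})), and $\langle N_n : n\le\omega\rangle$ with $q$ witnesses $\delta$-limit alternations at $\omega$, a contradiction. Part (\ref{part2}) is analogous at regular length $\alpha$ with $\delta=\sigma$: the fork requirement is immediate (every consecutive restriction $\ast$-forks by hypothesis), the non-fork steps come from strong universal local character at $\sigma$ applied inside the blocks (which are $(\mu,\sigma)$-limits), and the same bookkeeping chooses indices so the limit steps come out with cofinality $\cf\sigma$.

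The step I expect to be the main obstacle is (\ref{part13}). It is precisely the place where the division into cases in \cite{shvi635} went wrong, and making it work requires simultaneously meeting the ``fork'' requirement (the intermediate model must be high enough in its block), the ``$(\mu,\delta)$-limit step'' requirement (which pins down the admissible cofinality of the chosen index), and the ``successor stage'' output of (\ref{part12}) on a tail of length $\sigma$, all while never leaving the chain even when $\sigma$ is singular; getting the cofinality bookkeeping in the last paragraph exactly right is the heart of the lemma.
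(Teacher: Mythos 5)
Your overall decomposition (prove the five items in order, each drawing on the earlier ones), the treatment of item (\ref{part1}), and the treatment of item (\ref{part13}) closely match the paper's argument. In particular your proof of (\ref{part13}) — find a fork witness $j_i$, apply (\ref{part12}) to the tail to find a \emph{successor} $k_i$ with a consecutive non-fork, interleave these into a chain of length $\omega$, and check the limit-model bookkeeping using uniqueness of limit models of the same cofinality — is essentially the proof in the paper. (A small simplification: once $m_{n+1}$ is a successor, $M_{m_{n+1}}$ is $(\mu,\delta)$-limit over $M_{m_{n+1}-1}$, and since $M_{m_n}\leas M_{m_{n+1}-1}$ with $M_{m_n}$ an amalgamation base, $M_{m_{n+1}}$ is automatically $(\mu,\delta)$-limit over $M_{m_n}$; the explicit $\delta\cdot(r+s^*)$ computation is not needed.)

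However, there are two genuine gaps.

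For item (\ref{part12}), the contradiction argument does not close. You assume $p\rest M_{i+1}$ $\ast$-forks over $M_i$ for \emph{every successor} $i<\sigma$ and claim this lets you ``build a chain contradicting weak universal local character at $\sigma$.'' But weak universal local character only promises \emph{some} $i_0<\sigma$ with $p\rest M_{i_0+1}$ not $\ast$-forking over $M_{i_0}$, and that $i_0$ may be $0$ or a limit ordinal, which your assumption says nothing about. No re-indexing of the form ``rerun (\ref{part1}) at successor stages'' fixes this, because at limit $\xi$ the subchain you build necessarily has a limit index, and continuity is not available in (\ref{part12}). The paper's proof is direct rather than by contradiction: apply weak universal local character to the even-indexed subchain $\seq{M_{2j} : j\le\sigma}$ to obtain $j<\sigma$ with $p\rest M_{2j+2}$ not $\ast$-forking over $M_{2j}$, then use $(M)$ to raise the base to $M_{2j+1}$; the index $2j+1$ is a successor. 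This base-shift is the idea you are missing.

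For item (\ref{part2}), taking the block length equal to $\sigma$ does not give enough room for the cofinality adjustment. If $\sigma$ is regular then $\cf\sigma=\sigma$, so both $j_i+\sigma$ and $j_i+\cf\sigma$ equal $\sigma$ — outside the block — and there is no way to push $j_i$ to an ordinal of cofinality $\cf\sigma$ strictly below $\sigma$ by adding a fixed amount. The paper avoids this by taking blocks of length $\gamma:=\sigma\cdot\sigma$: then strong local character applied to the cofinal subchain $\seq{M_{i,\sigma\cdot\ell}:\ell\le\sigma}$ yields $j_i$ of the form $\sigma\cdot\ell$, and replacing $j_i$ by $j_i+\sigma=\sigma\cdot(\ell+1)<\gamma$ both stays inside the block and has cofinality $\cf\sigma$, after which every step of the resulting chain $\seq{M_0, M_{1,j_1}, M_2, M_{3,j_3},\dots}$ is a $(\mu,\sigma)$-limit. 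Your sketch glosses over precisely the point that forces the longer block.

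Parts (\ref{part1}) and (\ref{part11}) are fine as sketched, and your use of the ``final-segment'' consequence of $(M)$ is a reasonable organizing device, but you should verify (\ref{part12}) and (\ref{part2}) along the lines above before relying on them in Proposition \ref{forking-calc-lem-part3}.
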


\begin{proof}\
\begin{enumerate}
  \item Suppose that $\seq{M_i : i \le \alpha}$, $p$ is a counterexample. \\

    {\bf Claim:} For each $i < \alpha$, there exists $j_i \in (i, \alpha)$ such that $p \rest M_{j_i}$ $\ast$-forks over $M_i$.  \\

    {\bf Proof of Claim:} If $i < \alpha$ is such that for all $j \in (i, \alpha)$, $p \rest M_{j}$ does not $\ast$-fork over $M_i$, then applying universal continuity at $\alpha$ on the chain $\seq{M_k : k \in [i, \alpha]}$ we would get that $p$ does not $\ast$-fork over $M_i$, contradicting the choice of $\seq{M_i : i \le \alpha}$, $p$. \hfill $\dag_{\text{Claim}}$\\

    Now define inductively for $i \le \alpha$, $k_0 := 0$, $k_{i + 1} := j_{k_i}$, and when $i$ is limit $k_i := \sup_{j < i} k_j$. Note that $\seq{k_i : i \le \alpha}$ is strictly increasing continuous and $i < \alpha$ implies $k_i < \alpha$ (this uses regularity of $\alpha$; when $\alpha$ is singular, see (\ref{part13})).

    Apply weak universal local character to the chain $\seq{M_{k_i} : i \le \alpha}$ and the type $p$. We get that there exists $i < \alpha$ such that $p \rest M_{k_{i + 1}}$ does not $\ast$-fork over $M_{k_i}$. This is a contradiction since $k_{i + 1} = j_{k_i}$ and we chose $j_{k_i}$ so that $p \rest M_{j_{k_i}}$ $\ast$-forks over $M_{k_i}$.
  \item We prove the result for weak universal local character, and the proof for the strong version is similar. Fix $\seq{M_i^0 : i \le \sigma}$, $p$ witnessing failure of weak universal local character at $\sigma$. We build a witness of failure $\seq{M_i : i \le \sigma}$, $p$ such that $M_\sigma = M_\sigma^0$, and $M_{i + 1}$ is $(\mu, \delta)$-limit over $M_i$ for each $i < \alpha$. Using existence of universal extensions, we can extend each $M^0_i$ to $M^*_i$ that is $(\mu, \delta)$-limit over $M^0_i$.  Since $M^0_{i+1}$ is universal over $M^0_i$, we can find $f_i:M^*_{i+1} \to_{M^0_i} M^0_{i+1}$.  Since limit models are amalgamation bases, $f_i(M^*_{i+1})$ is an amalgamation base. Now set $M_i^1 := M_i^0$ for $i \leq \sigma$ limit or $0$ and $M_{i+1}^1 := f_i(M^*_{i+1})$.  This is an increasing continuous chain of amalgamation bases with $M_{i+1}^1$ $(\mu, \delta)$-limit over $M_i^1$. Let $M_i := M_{2i}^1$.

    This works: if there was an $i < \sigma$ such that $p \rest M_{i + 1}$ does not $\ast$-fork over $M_i$, this would mean that $p \rest M_{2i + 2}^1$ does not $\ast$-fork over $M_{2i}^1$, but since $M_{2i}^1 \leas M_{2i + 1}^0 \leas M_{2i + 2}^0 \leas M_{2i + 2}^1$, we have by $(M)$ that $p \rest M_{2i + 2}^0$ does not $\ast$-fork over $M_{2i + 1}^0$, a contradiction.
  \item Apply weak universal local character to the chain $\seq{M_{2i} : i < \sigma}$ to get $j < \sigma$ such that $p \rest M_{2j + 2}$ does not $\ast$-fork over $M_{2j}$. By monotonicity, this implies that $p \rest M_{2j + 2}$ does not $\ast$-fork over $M_{2j + 1}$. Let $i := 2j + 1$.
  \item Suppose not, and let $\seq{M_i : i \le \sigma}, p$ be a counterexample. By (\ref{part11}), without loss of generality $M_{i + 1}$ is $(\mu, \delta)$-limit over $M_i$ for all $i < \delta$. As in the proof of (\ref{part1}), for each $i < \sigma$, there exists $j_i \in [i, \sigma)$ such that $p \rest M_{j_i}$ $\ast$-forks over $M_i$. On the other hand, applying (\ref{part12}) to the chain $\seq{M_{j} : j \in [j_i, \sigma]}$, for each $i < \sigma$, there exists a \emph{successor} ordinal $k_i \ge j_i$ such that $p \rest M_{k_i + 1}$ does not $\ast$-fork over $M_{k_i}$. Define by induction on $n \le \omega$, $m_0 := 0$, $m_{2n + 1} := k_{m_{2n}}$, $m_{2n + 2} := k_{m_{2n}} + 1$, and $m_\omega := \sup_{n < \omega} m_n$. By construction, the sequence $\seq{M_{m_n} : n \le \omega}$ witnesses that $\adnf$ has $\delta$-limit alternations at $\omega$, a contradiction.
  \item Let $\gamma := \sigma \cdot \sigma$. By (\ref{part11}), there exists $\seq{M_i : i \le \alpha}$, $p$ witnessing failure of weak universal local character at $\alpha$ such that for all $i < \alpha$, $M_{i + 1}$ is $(\mu, \gamma)$-limit over $M_i$. Let $\seq{M_{i, j} : j \le \gamma}$ witness that $M_{i + 1}$ is $(\mu, \gamma)$-limit over $M_i$ (i.e.\ it is increasing continuous with $M_{i, j +1}$ universal over $M_{i, j}$ for all $j < \gamma$, $M_{i, 0} = M_i$, and $M_{i, \delta} = M_{i + 1}$). By strong universal local character at $\sigma$, for all $i < \alpha$, there exists $j_i < \gamma$ such that $p \rest M_{i + 1}$ does not $\ast$-fork over $M_{i, j_i}$. By replacing $j_i$ by $j_i + \sigma$ if necessary we can assume without loss of generality that $\cf{j_i} = \cf{\sigma}$.

    Observe also that for any $i < \alpha$, $p \rest M_{i + 1, j_i}$ $\ast$-forks over $M_i$ (using $(M)$ and the assumption that $p \rest M_{i + 1}$ $\ast$-forks over $M_i$). Therefore $\seq{M_0, M_{1, j_1}, M_2, M_{3, j_3}, \ldots}$, $p$ witness that $\adnf$ has $\sigma$-limit alternations at $\alpha$.

\end{enumerate}
\end{proof}

\begin{prop}\label{forking-calc-lem-part3}
Let $\alpha < \mu^+$ be a regular cardinal and $\sigma < \mu^+$ be a (not necessarily regular) cardinal
.  Assume that $\adnf$ has \emph{weak} universal local character at $\sigma$. If $\adnf$ has universal continuity at $\alpha$ and $\sigma$, $\adnf$ has no $\sigma$-limit alternations at $\omega$, and $\adnf$ has no $\sigma$-limit alternations at $\alpha$, then $\adnf$ has strong universal local character at $\alpha$.
\end{prop}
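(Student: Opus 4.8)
The Proposition should fall out of the preceding Lemma \ref{forking-calc-lem-part1} by a short chain of implications: first pass from weak to strong local character at the (possibly singular) cardinal $\sigma$, then transfer strong local character from $\sigma$ to the regular cardinal $\alpha$ using the absence of $\sigma$-limit alternations, and finally upgrade weak to strong at $\alpha$ using regularity and continuity. No new combinatorics or independence calculus is needed; the work has all been done in the lemmas.

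\textbf{Step 1: strong local character at $\sigma$.} Since $\sigma < \mu^+$ is an infinite cardinal it is in particular a limit ordinal, so we may apply Lemma \ref{forking-calc-lem-part1}.(\ref{part13}) with the limit ordinal $\delta$ of that lemma instantiated as $\delta := \sigma$. Its three hypotheses — universal continuity at $\sigma$, weak universal local character at $\sigma$, and no $\sigma$-limit alternations at $\omega$ — are all among the assumptions of the present Proposition. We conclude that $\adnf$ has \emph{strong} universal local character at $\sigma$. This is the substantive step: it is exactly here that the possible singularity of $\sigma$ is handled, since part (\ref{part13}) — unlike part (\ref{part1}) — does not require $\sigma$ to be regular.

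\textbf{Step 2: transfer to $\alpha$.} Now apply the contrapositive of Lemma \ref{forking-calc-lem-part1}.(\ref{part2}): it says that strong universal local character at $\sigma$ together with the failure of weak universal local character at $\alpha$ yields $\sigma$-limit alternations at $\alpha$. Since by hypothesis $\adnf$ has \emph{no} $\sigma$-limit alternations at $\alpha$, and by Step 1 it has strong universal local character at $\sigma$, we deduce that $\adnf$ has weak universal local character at $\alpha$.

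\textbf{Step 3: upgrade at $\alpha$.} Finally, $\alpha$ is regular, $\adnf$ has weak universal local character at $\alpha$ (Step 2), and $\adnf$ has universal continuity at $\alpha$ by hypothesis, so Lemma \ref{forking-calc-lem-part1}.(\ref{part1}) gives strong universal local character at $\alpha$, which is what we want. The only thing to watch is the bookkeeping of which assumption is used where — continuity and no alternations are each needed at both $\sigma$ and $\alpha$, and the weak local character hypothesis is consumed in Step 1 — but there is no genuine obstacle beyond correctly lining up the hypotheses of the three parts of Lemma \ref{forking-calc-lem-part1}.
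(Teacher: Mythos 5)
Your proof is correct and is essentially identical to the paper's: Lemma \ref{forking-calc-lem-part1}.(\ref{part13}) for strong local character at $\sigma$, the contrapositive of Lemma \ref{forking-calc-lem-part1}.(\ref{part2}) for weak local character at $\alpha$, and Lemma \ref{forking-calc-lem-part1}.(\ref{part1}) to upgrade to strong local character at $\alpha$. (Your closing bookkeeping remark misstates slightly where no-alternations is invoked — it is used at $\omega$ and at $\alpha$, not at $\sigma$ and $\alpha$ — but the proof steps themselves are right.)
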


\begin{proof}
By Lemma \ref{forking-calc-lem-part1}.(\ref{part13}), $\adnf$ has strong universal local character at $\sigma$. By the contrapositive of Lemma \ref{forking-calc-lem-part1}.(\ref{part2}), $\adnf$ has weak universal local character at $\alpha$. By Lemma \ref{forking-calc-lem-part1}.(\ref{part1}), $\adnf$ has strong universal local character at $\alpha$.
\end{proof}

The next lemma corresponds to the second step outlined at the beginning of this section. Note that the added assumption is (\ref{hyp-9}) from the hypotheses of Theorem \ref{shvi-technical} and recall we are assuming Hypothesis \ref{hyp} throughout.

\begin{lemma}\label{technical-solv-lem}
  Assume $\K$ has an EM blueprint $\Phi$ with $|\tau (\Phi)| \le \mu$ such that every $M \in \K_{[\mu, \mu^+]}$ embeds inside $\EM_{\tau} (\mu^+, \Phi)$ . Let $\alpha < \mu^+$ be a regular cardinal. Then:

  \begin{enumerate}
  \item $\adnf$ has universal continuity at $\alpha$.
  \item If in addition $\alpha < \mu$, then for any limit $\gamma < \mu^+$, $\adnf$ has no $\gamma$-limit alternations at $\alpha$.
  \end{enumerate}
\end{lemma}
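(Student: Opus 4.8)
The plan is to prove both parts by contradiction, transporting a putative counterexample into the Ehrenfeucht--Mostowski model $\bigN := \EM_{\tau}(\mu^+, \Phi)$ and then re-running, now in ZFC, the arguments for cases (a) and (b) in the proof of \cite[Theorem 2.2.1]{shvi635}, with $\bigN$ and its combinatorial structure in the roles the diamond-guessed model and club guessing played there. Throughout we use that, since $\adnf$ has $(I)$ and $(M)$, the assertion ``$\gtp (a/M; N)$ does not $\ast$-fork over $M_0$'' depends only on $M_0$ and the Galois type $\gtp (a/M; N)$ (Definition \ref{indep-def}); this legitimizes moving the whole configuration around. Two reductions are common to both parts. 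Given a counterexample chain $\seq{M_i : i \le \alpha}$ and type $p \in \gS (M_\alpha)$, first fix (by Hypothesis \ref{hyp}.(\ref{hyp-hyp-3})) an amalgamation base $N$ with $M_\alpha \lea N$, $\|N\| = \mu$, and $a \in |N|$ realizing $p$; second, using Hypothesis \ref{hyp}.(\ref{hyp-hyp-4}) pick an amalgamation base $M_\alpha^+$ with $M_\alpha \lea M_\alpha^+$ that is universal over $M_\alpha$, embed $M_\alpha^+$ into $\bigN$ by the hypothesis on $\Phi$, and rename so that $M_0 \lea M_\alpha \lea M_\alpha^+ \lea \bigN$. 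One then checks (using that $M_0$ and $M_\alpha$ are amalgamation bases) that $\bigN$ is universal over both $M_0$ and $M_\alpha$ for extensions of size $\mu$, that $N$ may also be taken with $N \lea \bigN$, and that $\gtp (a/M_i; \bigN) = p \rest M_i$ for all $i \le \alpha$. So the entire failure now lives inside $\bigN$, which carries the combinatorics of an EM model over $\mu^+$: every element has finite support in $\mu^+$, and every order-embedding of the index set $\mu^+$ into itself induces a $\K$-embedding of $\bigN$ into itself, an isomorphism onto an EM sub-hull.

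\emph{Part (1).} Here $\gtp (a/M_i; \bigN)$ does not $\ast$-fork over $M_0$ for each $i < \alpha$ while $\gtp (a/M_\alpha; \bigN)$ does, and $M_\alpha = \bigcup_{i < \alpha} M_i$ is a $(\mu, \alpha)$-limit over $M_0$. I would use the sub-hull embeddings of $\bigN$ to manufacture, over an appropriate support of $M_0$, a second resolution of a $(\mu, \alpha)$-limit model over $M_0$ carrying a realization of a copy of $p$; the uniqueness of $(\mu, \alpha)$-limit models over the amalgamation base $M_0$ (a standard back-and-forth using Hypothesis \ref{hyp}.(\ref{hyp-hyp-5})) identifies it with $M_\alpha$ over $M_0$, and then the fact that each $p \rest M_i$ does not $\ast$-fork over $M_0$, together with $M_\alpha$ being exhausted by the $M_i$, forces the new realization not to $\ast$-fork over $M_0$ --- contradicting the choice of $p$. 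I expect this to be the main obstacle: we know nothing about $\adnf$ beyond $(I)$ and $(M)$, so the only leverage is the purely combinatorial structure of $\bigN$ (supports and sub-hull embeddings) together with the Galois-type invariance of $\ast$-forking. The delicate points are to arrange the support so that some sub-hull embedding fixes exactly (a model containing) $M_0$ while displacing $M_\alpha$ and $a$, and then to pin down which Galois type the displaced realization has and why it cannot $\ast$-fork over $M_0$.

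\emph{Part (2).} Now the counterexample is an alternation: a chain $\seq{M_i : i \le \alpha}$ with $M_{i+1}$ a $(\mu, \gamma)$-limit over $M_i$ for each $i < \alpha$, and a type $p$ such that $p \rest M_{2i+1}$ $\ast$-forks over $M_{2i}$ while $p \rest M_{2i+2}$ does not $\ast$-fork over $M_{2i+1}$ for all $i < \alpha$. After the same transport into $\bigN$, I would run the reflection argument of case (b) of \cite[Theorem 2.2.1]{shvi635}, and here the hypothesis $\alpha < \mu$ is precisely the ZFC substitute for the club guessing on $\mu^+$ used there: since $\alpha < \mu$, the alternating configuration --- together with enough auxiliary $(\mu, \gamma)$-limit resolutions --- is supported on a set of size $\mu$, hence lives inside a single EM sub-hull $\EM_{\tau}(X, \Phi)$ with $|X| = \mu$, inside which the uniqueness of $(\mu, \gamma)$-limit models over a common amalgamation base can be brought to bear along a cofinal-in-$\alpha$ set of indices to contradict the alternation. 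As in Part (1), the bookkeeping (amalgamation-base extensions, arranging supports, lifting index-set self-maps) is routine; the substance is reproducing the Shelah--Villaveces reflection with the EM machinery replacing the set-theoretic guessing.
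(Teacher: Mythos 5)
Your proposal diverges from the paper's proof in two essential ways, and in both cases I think the gap is genuine rather than cosmetic.

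For part (1), you propose to embed a \emph{single} counterexample $\seq{M_i : i \le \alpha}$, $p$, into $\bigN := \EM_\tau(\mu^+,\Phi)$ and then use sub-hull self-embeddings of $\bigN$ (induced by order-embeddings of $\mu^+$) to manufacture a second resolution to clash with the first. The paper does something structurally different: it builds a chain $\seq{N_i : i < \mu^+}$ inside $\bigN$ in which an isomorphic copy of the \emph{entire} configuration — the resolution $\seq{M_j : j \le \alpha}$ and a realization $a_i$ of (the image of) $p$ — is planted at \emph{every} $i \in S^{\mu^+}_\alpha$, indexed by a club $C_i \subseteq i$. It then applies Fodor's Lemma to the map $i \mapsto (\rho_i, m(i), n(i), \gamma^i_1,\dots,\gamma^i_{m(i)}, \beta_{i,0})$ recording the finite EM-support of $a_i$ below $i$ and the base point $\beta_{i,0}$, to get a stationary $S^*$ on which all of this is constant. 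For $i_1 < i_2$ in $S^* \cap E$ the EM calculus then forces $\gtp(a_{i_1}/N_{i_1}) = \gtp(a_{i_2}/N_{i_1})$, while one $\ast$-forks over $N_{\beta_{*,0}}$ and the other does not — a contradiction. Your plan to get the second realization from a self-map $f$ of $\bigN$ that ``fixes exactly a model containing $M_0$ while displacing $M_\alpha$ and $a$'' does not obviously produce two elements with the \emph{same} Galois type over a common base yet \emph{different} forking behavior: if $f$ fixes $M_0$ pointwise and moves $a$, you get a pair $a, f(a)$ whose types over $M_\alpha$ (as opposed to over $f(M_\alpha)$) need not be comparable at all, and controlling them requires precisely the pigeonholing on supports that Fodor's Lemma supplies over $\mu^+$-many planted copies. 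You flag this as ``the main obstacle'' and leave it unresolved; I believe it is not a bookkeeping matter but the heart of the proof, and I do not see how the single-copy-plus-self-map approach can produce the required collision without in effect reconstructing the Fodor step.

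For part (2), your explanation of the hypothesis $\alpha < \mu$ is not what the paper does, and I think it is substantively wrong. You write that ``$\alpha < \mu$ is precisely the ZFC substitute for the club guessing on $\mu^+$,'' and that because the alternating configuration is supported on a set of size $\mu$ it ``lives inside a single EM sub-hull'' in which uniqueness of limits can be applied. In fact the paper's proof of part (2) \emph{does} use club guessing: it invokes Shelah's ZFC theorem (Fact~\ref{club-guess-fact-1}) giving an $S^{\mu^+}_\alpha$-club-guessing sequence, and the role of $\alpha < \mu$ is exactly to ensure the hypothesis $\cf{\mu^+} = \mu^+ \ge \alpha^{++}$ of that theorem. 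With the guessed club sequence, and a chain $\seq{\mathfrak{B}_i : i < \mu^+}$ of elementary submodels of $(H(\chi),\in)$, the paper finds an $i_2 \in S^{\mu^+}_\alpha$ with $C_{i_2}$ inside the club of closure points and then uses elementarity to locate a second index $i_1$ in a specific interval $(\beta_{i_2,2j+1},\beta_{i_2,2j+2})$ with the same downward EM-support, yielding two realizations $a_{i_1}, a_{i_2}$ with equal types over $N_{i_1}$ but opposite $\ast$-forking behavior. Restricting attention to an EM sub-hull of size $\mu$ does not by itself give you two comparable planted realizations; the club guessing (together with the $\mathfrak{B}_i$'s) is what locates them. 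So the ZFC-ness comes from the fact that Shelah's club-guessing theorem is a ZFC theorem (valid when $\alpha^{++} \le \mu^+$), not from avoiding club guessing altogether.

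In short: the missing idea in your proposal is that one must plant $\mu^+$-many copies of the configuration along a chain and use a pigeonhole principle (Fodor in part (1), ZFC club guessing in part (2)) on the finite EM-supports of the planted realizations; without that, there is no mechanism to produce the pair of elements whose Galois types agree while their $\ast$-forking behaviors disagree.
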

\begin{proof}
Let $\seq{M_i \mid i \leq \alpha}$ and $p$ be as in the definition of universal continuity or $\gamma$-limit alternations.  Let $S^{\mu^+}_\alpha := \{ \delta < \mu^+ \mid \cf \delta = \alpha\}$.  We say that $\bar C = \seq{C_\delta \mid \delta \in S^{\mu^+}_\alpha}$ is an \emph{$S^{\mu^+}_\alpha$-club sequence} if each $C_\delta \subseteq \delta$ is club.  Clearly, club sequences exist: just take $C_\delta := \delta$ (this will be enough for proving universal continuity). Shelah \cite{shg} proves the existence of club-guessing club sequences in ZFC under various hypotheses (the specific result that we use will be stated later, see Fact \ref{club-guess-fact-1}).  We will describe a construction of a sequence of models $\bar N(\bar C)$ based on a club sequence and then plug in the necessary club sequence in each case.

Given an $S^{\mu^+}_\alpha$-club sequence $\bar C$, enumerate $C_\delta \cup \{\delta\}$ in increasing order as $\seq{\beta_{\delta, j} \mid j \le \alpha}$.

{\bf Claim:}
  Let $\gamma < \mu^+$ be a limit ordinal. We can build increasing, continuous $\bar N(\bar C) = \seq{N_i \in \K^\ast \mid i < \mu^+}$ such that  for all $i < \mu^+$:
\begin{enumerate}
	\item $N_{i+1}$ is $(\mu, \gamma)$-limit over $N_i$;
	\item when $i \in S^{\mu^+}_\alpha$, there is $g_i:M_\alpha \cong N_i$ such that $g_i(M_j) = N_{\beta_{i, j}}$ for all $j \leq \alpha$; and:
	\item when $i \in S^{\mu^+}_\alpha$, there is $a_i \in N_{i+1}$ that realizes $g_i(p)$.
\end{enumerate}
{\bf Proof of Claim:} Build the increasing continuous chain of models as follows: start with an amalgamation base $N_0$, which exists by Hypothesis \ref{hyp}.(\ref{hyp-hyp-3}).  Given an amalgamation base $N_i$, build $N_{i+1}$ to be $(\mu, \gamma)$-limit over it. This exists by Hypothesis \ref{hyp}.(\ref{hyp-hyp-4}) of Theorem \ref{shvi-technical}), and $N_{i+1}$ is an amalgamation base by Hypothesis \ref{hyp}.(\ref{hyp-hyp-5}).  At limits, it also guarantees we have an amalgamation base.

At limits $i$ of cofinality $\alpha$, use the uniqueness of $(\mu, \gamma)$-limits models to find the desired isomorphisms: the weak version gives $M_0 \cong M_{\beta_{i, 0}}$, and the strong (over the base) version allows this isomorphism to be extended to get an isomorphism $g_i$ between $\seq{M_j \mid j \leq \alpha}$ and $\seq{N_{\beta_{i, j}} \mid j \leq \alpha}$ as described.  Since $N_{i+1}$ is universal over $N_i$, we there is some $a_i \in N_{i+1}$ that realizes $g_i(p)$. \hfill $\dag_{\text{Claim}}$\\

By assumption, we may assume that $N:= \bigcup_{i < \mu^+} N_i \leas \EM_\tau(\mu^+, \Phi)$.  Thus, we can write $a_i = \rho_i(\gamma_1^i, \dots, \gamma_{n(i)}^i)$ with:

$$
\gamma^i_1< \dots <\gamma^i_{m(i)} < i \leq \gamma^i_{m(i)+1} < \dots< \gamma^i_{n(i)} < \mu^+
$$

Now we begin to prove each part of the lemma.  In each, we will find $i_1 < i_2 \in S^{\mu^+}_\alpha$ such that $\gtp(a_{i_1}/N_{i_1}; N)$ and $\gtp(a_{i_2}/N_{i_1}; N)$ are both the same (because of the EM structure) and different (because they exhibit different $*$-forking behavior), which is our contradiction.

\begin{enumerate}
  \item Assume that $p \rest M_{j}$ does not fork over $M_0$, for all $j < \alpha$.

    Let $\bar{C}$ be an $S^{\mu^+}_\alpha$-club sequence, and set $\seq{N_i \in \K^\ast \mid i < \mu^+} = \bar N(\bar C)$ as in the Claim (the value of $\gamma$ doesn't matter here, e.g.\ take $\gamma := \omega$).  By Fodor's Lemma, there is a stationary subset $S^* \subseteq S^{\mu^+}_\alpha$, a term $\rho_\ast$, $m_\ast, n_\ast < \omega$ and ordinals $\gamma_{0}^\ast, \ldots \gamma_{n_\ast}$, $\beta_{\ast, 0}$ such that: \bigskip
    
    \begin{enumerate}
    \item[] For every $i \in S^*$, we have $\rho_i = \rho_*$; $n(i) = n_*$; $m(i) = m_*$; $\gamma^i_j = \gamma^*_j$ for $j \leq m_*$; and $\beta_{i,0} = \beta_{*,0}$.
    \end{enumerate}

    \bigskip
    
    Set $E := \{\delta < \mu^+ \mid \delta$ is limit and $\EM_{\tau} (\delta, \Phi) \cap N = N_\delta\}$.  This is a club.  Let $i_1 < i_2$ both be in $S^* \cap E$.  Then we have:

    \begin{align*}
      \gtp\left(a_{i_1}/N_{i_1}\right) &= \gtp\left(\rho_*(\gamma^*_1, \dots,\gamma^*_{m_*}, \gamma^{i_1}_{m_*+1}, \dots, \gamma^{i_1}_{n_*})/N \cap \EM_\tau(i_1, \Phi)\right)\\
      &=\gtp\left(\rho_*(\gamma^*_1, \dots, \gamma^*_{m_*}, \gamma^{i_2}_{m_*+1}, \dots, \gamma^{i_2}_{n_*})/N \cap \EM_\tau(i_1, \Phi)\right) \\ 
      &= \gtp\left(a_{i_2}/N_{i_1}\right)
    \end{align*}

    where all the types are computed inside $N$. This is because the only differences between $a_{i_1}$ and $a_{i_2}$  lie entirely above $i_1$.
    
    We have that $g_{i_1}:(N_{i_1}, N_{\beta_{*, 0}}) \cong (M_\alpha, M_0)$ and that $p$ $*$-forks over $M_0$.  Thus, $\gtp(a_{i_1}/N_{i_1}) = g_{i_1}(p)$ $*$-forks over $N_{\beta_{*, 0}}$.  On the other hand, $C_{i_2}$ is cofinal in $i_2$, so there is $j < \alpha$ such that $\beta_{i_2, j} > i_1$ and, thus, $N_{i_1} \leas N_{\beta_{i_2, j}}$.  Again, $g_{i_2}:(N_{\beta_{i_2, j}}, N_{\beta_{*, 0}}) \cong (M_j, M_0)$ and $p \rest M_j$ does not $*$-fork over $M_0$ by assumption.  Thus, $\gtp(a_{i_2}/N_{\beta_{i_2,j}}) = g_{i_2} (p \rest M_j)$ does not $*$-fork over $N_{\beta_{*, 0}}$.  By monotonicity (M), $\gtp(a_{i_2}/N_{i_1})$ does not $*$-fork over $N_{\beta_{*,0}}$.  Thus, $\gtp(a_{i_1}/N_{i_1}) \neq \gtp(a_{i_2}/N_{i_2})$, a contradiction.

  \item  Let $\chi$ be a big-enough cardinal and create an increasing, continuous elementary chain of models of set theory $\seq{\mathfrak{B}_i \mid i < \mu^+}$ such that for all $i < \mu^+$:
    
\begin{enumerate}
	\item $\mathfrak{B}_i \prec (H(\chi), \in)$;
	\item $\|\mathfrak{B}_i\| = \mu$;
	\item \label{things} $\mathfrak{B}_0$ contains, as elements\footnote{When we say that $\mathfrak{B}_0$ contains a sequence as an element, we mean that it contains the function that maps an index to its sequence element.}, $\Phi$, $\EM(\mu^+,\Phi)$, $h$, $\mu^+$, $\seq{N_i \mid i < \mu^+}$, $S^{\mu^+}_\alpha$, $\seq{a_i \mid i \in S^{\mu^+}_\alpha}$,  and each $f \in \tau(\Phi)$; and 
	\item $\mathfrak{B}_i \cap \mu^+$ is an ordinal.
\end{enumerate}

We will use the following fact which was originally proven in \cite[III.2]{shg} (or see \cite[Theorem 2.17]{cardarithm} for a short proof).

\begin{fact} \label{club-guess-fact-1}
Let $\lambda$ be a cardinal such that $\cf \lambda \geq \theta^{++}$ for some regular $\theta$ and let $S \subseteq S^\lambda_\theta$ be stationary.  Then there is a $S$-club sequence $\seq{C_\delta \mid \delta \in S}$ such that, if $E \subseteq \lambda$ is club, then there are stationarily many $\delta \in S$ such that $C_\delta \subseteq E$.
\end{fact}


We have that $\alpha < \mu$, so we can apply Fact \ref{club-guess-fact-1} with $\lambda, \theta, S$ there standing for  $\mu^+, \alpha, S^{\mu^+}_{\alpha}$ here. Let $\bar{C}$ be the $S^{\mu^+}_\alpha$-club sequence that the fact gives. Let $\seq{N_i \in K_\mu \mid i < \mu^+} = \bar N(\bar C)$ be as in the Claim.  Note that $E := \{ i < \mu^+ \mid \mathfrak{B}_i \cap \mu^+ = i\}$ is a club.  By the conclusion of Fact \ref{club-guess-fact-1}, there is some $i_2 \in S^{\mu^+}_{\alpha}$ such that $C_{i_2} \subseteq E$.  We have $a_{i_2} = \rho_{i_2}(\gamma_1^{i_2}, \dots, \gamma_{n(i_2)}^{i_2})$, with:

$$
\gamma_1^{i_2} < \dots < \gamma_{m(i_2)}^{i_2} < i_2 \leq \gamma^{i_2}_{m(i_2)+1} < \dots < \gamma^{i_2}_{n(i_2)}
$$

Since the $\beta_{i_2, j}$'s enumerate a cofinal sequence in $i_2$, we can find $j < \alpha$ such that $\gamma_{m(i_2)}^{i_2} < \beta_{i_2, 2j+1} < i$.  Recall that we have $p \rest M_{2j+2}$ does not $\ast$-fork over $M_{2j+1}$ by assumption.  Then $(H(\chi), \in)$ satisfies the following formulas with parameters exactly the objects listed in item (\ref{things}) above and ordinals below $\beta_{i_2, 2j+2}$:

\begin{align*}
  \exists x, y_{m(i_2)+1}, \ldots, y_{n(i)}.& ( ``x \in S^{\mu^+}_{\alpha}" \\
  &\wedge ``x > \beta_{i_2, 2j+1}" \wedge ``y_k \in (x, \mu^+) \text{ are increasing ordinals}"\\
  &\wedge ``a_x = \rho_{i_2}(\gamma_1^{i_2}, \dots, \gamma_{m(i_2)}^{i_2}, y_{m(i_2)+1}, \dots, y_{n(i_2)})" \\
  &\wedge ``N_x \subset \EM(x, \Phi)" ) \\
\end{align*}

This is witnessed by $x = i_2$ and $y_k = \gamma^{i_2}_k$.  By elementarity, $\mathfrak{B}_{\beta_{_2, 2j+2}}$ satisfies this formula as it contains all the parameters.  Let $i_1 \in (\beta_{i_2, 2j+1}, \mu^+) \cap \mathfrak{B}_{\beta_{i_2, 2j+2}} = (\beta_{i_2, 2j+1}, \beta_{i_2, 2j+2})$\footnote{The equality here is the key use of club guessing.} witness this, along with $\gamma'_{m(i_2)+1} < \dots < \gamma'_{n(i_2)} < \mu^+$.  Then we have:

$$a_{i_1} = \rho_{i_2}(\gamma_1^{i_2}, \dots, \gamma_{m(i_2)}^{i_2}, \gamma'_{m(i_2)+1}, \dots, \gamma'_{n(i_2)})$$

with $\beta_{i_2, 2j+1} < \gamma_{m(i_2)+1}$.  We want to compare $\gtp(a_{i_2}/N_{i_1})$ and $\gtp(a_{i'}/N_{i_1})$.

\begin{itemize}

	\item From the elementarity, we get that $N_{i_1} \subseteq \EM_\tau(i_1, \Phi)$.  We also know that $i_1 < \beta_{i_2, 2j+2} < \gamma^{i_2}_{m(i_2)+1}, \gamma'_{m(i_2)+1}$.  Thus, as before, the types are equal.

	\item We know that $p \rest M_{2j+2}$ does not $*$-fork over $M_{2j+1}$.  Thus, $\gtp(a_{i_2}/N_{\beta_{i_2, 2j+2}})$ does not $*$-fork over $N_{\beta_{i_2, 2j+1}}$.  Since we have $N_{\beta_{i_2, 2j+1}} \leas N_{i_1} \ltas N_{\beta_{i_2, 2j+2}}$, this gives $\gtp(a_{i_2}/N_{i_1})$ does not $*$-fork over $N_{\beta_{i_2, 2j+1}}$.
	
	\item We have $\beta_{i_2, 2j+1} < i_1$, so there is some $k < \alpha$ such that $\beta_{i_2, 2j+1} < \beta_{i_1, k} < i'$.  By assumption, $p$ $*$-forks over $M_k$.  Thus $g_{i_1}(p)$ $*$-forks over $N_{\beta_{i_1,k}}$. Therefore $\gtp(a_{i_1}/N_{i_1})$ $*$-forks over $N_{\beta_{i_2, 2j+1}} \leas N_{\beta_{i_1,k}}$.

\end{itemize}

As before, these three statements contradict each other.
\end{enumerate}
\end{proof}

We now prove the main theorem, Theorem \ref{shvi-technical}.  Recall that the assumptions of this theorem include the main context of this section (Hypothesis \ref{hyp}); $\adnf$ has weak universal local character somewhere; and $\K$ has an $EM$ blueprint that every model embeds into.

\begin{proof}[Proof of Theorem \ref{shvi-technical}]
Pick a cardinal $\sigma < \mu^+$ such that $\adnf$ has weak universal local character at $\sigma$ (exists by assumption (\ref{hyp-8})).

  As announced at the beginning of this section, our proof of Theorem \ref{shvi-technical} really has two steps: a forking calculus step (implemented in Lemmas \ref{forking-calc-lem-part0} and \ref{forking-calc-lem-part1} and Proposition \ref{forking-calc-lem-part3}) and a set-theoretic step (implemented in Lemma \ref{technical-solv-lem}). The claim below is key. The work done in the first step will show that the claim suffices, and the second step will prove the claim.

  \underline{Claim}: For any limit ordinal $\gamma < \mu^+$ and any regular cardinal $\alpha < \mu^+$, $\adnf$ has universal continuity at $\alpha$ and no $\gamma$-limit alternations at $\alpha$.

  By Proposition \ref{forking-calc-lem-part3}, the claim implies that $\adnf$ has strong universal local character at any regular $\alpha < \mu^+$. This suffices by Remark \ref{loc-rmk}. It remains to prove the claim.

  \underline{Proof of Claim}: Universal continuity holds by Lemma \ref{technical-solv-lem}. When $\alpha < \sigma$, Lemma \ref{technical-solv-lem} also gives that $\adnf$ has no $\gamma$-limit alternations at $\alpha$. Assume now that $\alpha \ge \sigma$. By Remark \ref{loc-rmk}, $\adnf$ has weak universal local character at any limit $\sigma' \in [\sigma, \mu^+)$, so in particular in $\alpha$. By Lemma \ref{forking-calc-lem-part0}, $\adnf$ has no $\gamma$-limit alternations at $\alpha$, as desired. $\dagger_{\text{Claim}}$
\end{proof}

\bibliographystyle{amsalpha}
\bibliography{SVTheoremNotes}

\end{document}